\documentclass{amsart}

\usepackage{amssymb,amsmath,color,ytableau,graphicx,float}
\usepackage{amsthm, url}
\usepackage{url}
\usepackage{tikz}
\usepackage{hyperref}
\usepackage[enableskew]{youngtab}

\definecolor{red}{rgb}{1,0,0}
\definecolor{blue}{rgb}{.2,.2,.8}

\def\mex{\textrm{mex}}

\newtheorem{theorem}{Theorem}[section]
\newtheorem{corollary}[theorem]{Corollary}
\newtheorem{proposition}{Proposition}
\newtheorem{conjecture}{Conjecture}

\theoremstyle{definition}

\newtheorem{remark}{Remark}

\begin{document}
\allowdisplaybreaks

\title[Truncated Theta Series Related to the JTP Identity]{Truncated Theta Series Related to the Jacobi Triple Product Identity}

\author{Cristina Ballantine}\address{Department of Mathematics and Computer Science\\ College of the Holy Cross \\ Worcester, MA 01610, USA \\} 
\email{cballant@holycross.edu} 
\author{Brooke Feigon} \thanks{B.F. partially supported by Simons Foundation Collaboration Grant 635835}\address{Department of Mathematics \\ The City College of New York \\ New York, NY 10031,  USA \\} \email{bfeigon@ccny.cuny.edu} 
\begin{abstract} The work of Andrews and Merca on the truncated Euler's pentagonal number theorem led to a resurgence in research on truncated theta series identities. In particular, Yee proved a truncated version of the Jacobi Triple Product (JTP) identity. Recently, Merca conjectured a stronger form of the truncated JTP identity. In this article we prove the first three cases of the conjecture and several related truncated identities. We  prove combinatorially an identity related to the JTP identity which in particular cases reduces to identities conjectured by Merca and proved analytically by
 Krattenthaler, Merca and Radu. Moreover, we introduce a new combinatorial interpretation for the number of distinct $5$-regular partitions of $n$. 
\end{abstract}

\subjclass[2020]{05A17,  11P81, 05A19, 05A20}

\keywords{partitions, theta series, truncated series, combinatorial injection}

\maketitle

\section{Introduction}
A partition $\lambda$ of $n$ is a non-increasing sequence $\lambda= (\lambda_1, \lambda_2, \ldots, \lambda_\ell)$ of positive integers that add up to $n$. We refer to the integers $\lambda_i$ as the parts of $\lambda$.  As usual, we denote by $p(n)$  the number of  partitions of $n$. Since the empty partition $\emptyset$ is the only partition of $0$, we have  $p(0)=1$.   For a thorough introduction to the theory of partitions we refer the reader to  \cite{A98, AE}. 

We   use the Pochhammer symbol: for $n \in \mathbb N_0 \cup \{\infty\}$  
$$(a;q)_n := \prod_{j=0}^{n-1} (1-a q^j).$$

The generating function for the sequence $p(n)$ is given by $$\sum_{n=0}^\infty p(n)q^n=\prod_{i=1}^\infty \frac{1}{1-q^i}=\frac{1}{(q;q)_\infty}.$$
Throughout this article, we assume $q$ is a complex number with $|q|<1$ so that all products and series converge absolutely. 

The earliest theta series identity is likely Euler's pentagonal number theorem \cite[Eq. (1.3.1)]{A98}: \begin{equation}\label{epnt} \sum_{j=-\infty}^\infty (-1)^jq^{j(3j-1)/2}=(q;q)_\infty. \end{equation} Multiplying both sides in \eqref{epnt} by $1/(q;q)_\infty$, leads to the recurrence relation $$p(n)=  \sum_{j=1}^\infty (-1)^{j-1} p(n-j(3j\pm 1)/2)$$
for $n\geq 1$.

In 1951, Shanks \cite{S1} considered a truncation of the  theta series occurring in \eqref{epnt} and proved  that \begin{equation} \label{S1-trunc} \sum_{j=-k}^k(-1)^jq^{j(3j-1)/2}=\sum_{j=0}^k\frac{(-1)^j(q;q)_kq^{\binom{j+1}{2}+jk}}{(q;q)_j}.
\end{equation}
With few exceptions \cite{S2, A86} the subject was dormant for more than half a century. Then, in 2012, Andrews and Merca \cite{AM} obtained a new truncation of Euler's pentagonal number theorem. They showed that \begin{equation}\label{epnt-trunc}\frac{1}{(q;q)_\infty}\sum_{j=0}^{k-1}(-1)^iq^{j(3j+1)/2}(1-q^{2j+1})=1+(-1)^{k-1}\sum_{n=1}^\infty \frac{q^{\binom{k}{2}+(k+1)n}}{(q;q)_n}
\begin{bmatrix} n-1\\k-1\end{bmatrix},
\end{equation}
where  
$$
\begin{bmatrix}
n\\k
\end{bmatrix} 
=
\begin{cases}
\dfrac{(q;q)_n}{(q;q)_k(q;q)_{n-k}} &  \text{if $0\leqslant k\leqslant n$},\\ \ \\ 
0 &\text{otherwise.}
\end{cases}
$$ As explained in \cite{AM}, combinatorially,  \eqref{epnt-trunc} means that $$(-1)^{k-1}\sum_{j=-(k-1)}^{k}(-1)^jp(n-j(3j-1)/2)=M_k(n),$$ where $M_k(n)$ is the number of partitions of $n$ in which $k$ is the smallest integer that is not a part of the partition and there are more parts greater than $k$ than there are less than $k$. 
 
Work of Merca \cite[Corollary 17]{M4} shows that Shank's truncation \eqref{S1-trunc}  leads to  the following inequality which holds for all $n, k\geq 1$: \begin{equation}\label{S1-ineq} (-1)^k \sum_{j=-k}^{k}(-1)^jp(n-j(3j-1)/2)\geq 0.  \end{equation} Xia and Zhao \cite{XZ} showed that the left hand side of \eqref{S1-ineq} equals the number of partitions of $n$  in which every integer less than or equal to $k$ appears as a part at least once and the first part larger than $k$ appears at least $k+1$ times.
 
 The work of Andrews and Merca opened up the study of truncated theta series and linear partition inequalities and many articles followed. For example, Guo and Zeng \cite{GZ} obtained truncations of Gauss' identities \cite[(2.2.12), (2.2.13)]{A98}, and Yee \cite{Y} and also Wang and Yee \cite{WY} found a truncation of the Jacobi triple product identity \begin{equation}\label{JTP_RS} \frac{1}{(q^S;q^R)_\infty(q^{R-S};q^R)_\infty(q^R;q^R)_\infty}\sum_{n=-\infty}^\infty(-1)^nq^{Rn(n-1)/2+Sn}=1, \end{equation} where $R$ and $S$ are positive integers with $1\leq S< R/2$. 
 Wang and Yee show explicitly that the truncations 
 \begin{align*}\frac{1}{(q^S;q^R)_\infty(q^{R-S};q^R)_\infty(q^R;q^R)_\infty}\sum_{n=-(m-1)}^m(-1)^nq^{Rn(n-1)/2+Sn}  \end{align*} and  \begin{align*} \frac{1}{(q^S;q^R)_\infty(q^{R-S};q^R)_\infty(q^R;q^R)_\infty}\sum_{n=-m}^m(-1)^nq^{Rn(n-1)/2+Sn},
 \end{align*}
 when expanded as $q$-series, have non-negative coefficients. 
 
 Motivated by these results, Merca \cite{M1} investigated truncations of Jacobi's triple product written in the form \begin{equation}\label{JTP_RS_M} \frac{1}{(q^S;q^R)_\infty(q^{R-S};q^R)_\infty}\sum_{n=-\infty}^\infty(-1)^nq^{Rn(n-1)/2+Sn}-(q^R;q^R)_\infty=0 \end{equation}
 and made a stronger conjecture.
 
 \begin{conjecture}{\cite[Conjecture 4.3]{M1}} \label{Conj 4.3} For $1\leq S<R$, $k\geq 1$, the theta series  \begin{equation}\label{conj 4.3} \frac{(-1)^k}{(q^S;q^R)_\infty(q^{R-S};q^R)_\infty}\sum_{j=k}^\infty(-1)^jq^{Rj(j+1)/2-Sj}\big(1-q^{S(2j+1)}\big) \end{equation} has non-negative coefficients. 
 \end{conjecture}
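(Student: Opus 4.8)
The plan is to first replace the one-sided theta series in \eqref{conj 4.3} by the difference of the Jacobi triple product and a \emph{finite} theta polynomial, and then dispose of the cases $k=1,2,3$ essentially by hand. Write $f(q)\succeq g(q)$ to mean that $f(q)-g(q)$ has non-negative coefficients, and set $D(q):=(q^S;q^R)_\infty(q^{R-S};q^R)_\infty$. For every $j\ge 1$ one has the elementary identity
\[
(-1)^jq^{Rj(j+1)/2-Sj}\bigl(1-q^{S(2j+1)}\bigr)=(-1)^{-j}q^{R(-j)(-j-1)/2+S(-j)}+(-1)^{j+1}q^{R(j+1)j/2+S(j+1)},
\]
so the $j$-th summand of \eqref{conj 4.3} is exactly the sum of the $n=-j$ and $n=j+1$ terms of the bilateral series $\sum_{n\in\Z}(-1)^nq^{Rn(n-1)/2+Sn}$. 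Summing over $j\ge k$ and invoking the Jacobi triple product \eqref{JTP_RS} in the form $\sum_{n\in\Z}(-1)^nq^{Rn(n-1)/2+Sn}=D(q)(q^R;q^R)_\infty$ gives
\[
\sum_{j=k}^\infty(-1)^jq^{Rj(j+1)/2-Sj}\bigl(1-q^{S(2j+1)}\bigr)=D(q)(q^R;q^R)_\infty-T_k(q),\qquad T_k(q):=\sum_{n=-(k-1)}^{k}(-1)^nq^{Rn(n-1)/2+Sn}.
\]
Hence the series \eqref{conj 4.3} equals $(-1)^{k-1}\bigl(T_k(q)/D(q)-(q^R;q^R)_\infty\bigr)$, and Conjecture~\ref{Conj 4.3} at level $k$ reduces to the single statement $(-1)^{k-1}T_k(q)/D(q)\succeq(-1)^{k-1}(q^R;q^R)_\infty$.

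For $k=1$ we have $T_1(q)=1-q^S$, hence $T_1(q)/D(q)=B(q):=1/\bigl((q^{R+S};q^R)_\infty(q^{R-S};q^R)_\infty\bigr)$, which has non-negative coefficients. By Euler's pentagonal number theorem \eqref{epnt} applied with $q^R$ in place of $q$, the coefficients of $(q^R;q^R)_\infty$ lie in $\{-1,0,1\}$, so $B(q)\succeq(q^R;q^R)_\infty$ holds as soon as $[q^m]B(q)\ge 1$ whenever $[q^m](q^R;q^R)_\infty=1$, i.e.\ whenever $m=Rt$ with $t=j(3j-1)/2$ for some even $j$, so $t=0$ or $t\ge 5$. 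For $t=0$ this is trivial, and for $t\ge 2$ choosing the part $(t-1)R+S$ from the factor $1/(q^{R+S};q^R)_\infty$ together with the part $R-S$ from $1/(q^{R-S};q^R)_\infty$ exhibits a partition of $Rt$ counted by $B(q)$; this settles $k=1$.

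For $k=2,3$ the crucial observation is that $T_k(q)$ is divisible by $1-q^S$:
\[
T_2(q)=(1-q^S)\bigl(1-q^{R-S}-q^R-q^{R+S}\bigr),
\]
\[
T_3(q)=(1-q^S)\bigl(1-q^{R-S}-q^R-q^{R+S}+q^{3R-2S}+q^{3R-S}+q^{3R}+q^{3R+S}+q^{3R+2S}\bigr).
\]
Thus $T_k(q)/D(q)=U_k(q)B(q)$ with $U_k(q)\in\Z[q]$, and the criterion above reads, for $k=2$,
\[
\bigl(q^{R-S}+q^R+q^{R+S}\bigr)B(q)\ \succeq\ B(q)-(q^R;q^R)_\infty,
\]
and, for $k=3$,
\[
B(q)+\bigl(q^{3R-2S}+q^{3R-S}+q^{3R}+q^{3R+S}+q^{3R+2S}\bigr)B(q)\ \succeq\ \bigl(q^{R-S}+q^R+q^{R+S}\bigr)B(q)+(q^R;q^R)_\infty.
\]
I would prove these combinatorially, interpreting $B(q)$ as the generating function for partitions into parts $\equiv\pm S\pmod R$ different from $S$ (in the band $R=2S$ this description must be read off the product directly, but the argument is the same): the terms $q^{R-S}B(q)$, $q^RB(q)$, $q^{R+S}B(q)$ enumerate such partitions carrying, respectively, a part $R-S$ to delete, a copy of $R$ to strip off a part $\ge 2R-S$, or a part $R+S$ to delete, and every non-empty partition admits at least one of these moves. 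This yields an injection proving $\bigl(q^{R-S}+q^R+q^{R+S}\bigr)B(q)\succeq B(q)-1$, with equality in degree $0$. One then upgrades this to also beat $-(q^R;q^R)_\infty$: at a pentagonal exponent $m=Rt$ one either faces a deficit of one, which is absorbed by the representability argument of the $k=1$ case applied to the enlarged shift set, or a surplus requirement of one, which is met by pointing to a partition of $m-R$ with all parts $\ge 2R-S$ that escapes the injection. For $k=3$ the same scheme applies, peeling off the layer $\{R-S,R,R+S\}$ twice against the larger left-hand layer, the new shifts $q^{3R-2S}B(q),\dots,q^{3R+2S}B(q)$ supplying precisely the additional room.

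The genuinely delicate part is this last step. First, the part-removal maps must be made simultaneously well defined and injective in the presence of repeated parts and when $\gcd(S,R)>1$, and the case $R=2S$ needs separate care. Second---and more importantly---one must track the image of the injection exactly at the pentagonal exponents $m=Rt$, since that is precisely where $(q^R;q^R)_\infty$ contributes and where the coefficient inequality is tight, so there is no slack. For $k\ge 4$ the polynomial $U_k(q)$ develops further sign changes and the bookkeeping of unmatched partitions becomes considerably heavier, which is why this approach is carried out only for $k\le 3$.
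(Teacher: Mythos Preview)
Your reduction is exactly the paper's: the identity rewriting the tail as $D(q)(q^R;q^R)_\infty-T_k(q)$ is precisely how \eqref{trunc-equiv} arises, and your criterion $(-1)^{k-1}T_k(q)/D(q)\succeq(-1)^{k-1}(q^R;q^R)_\infty$ is the statement $a_{S,R,k}(n)\ge 0$. Your factorization $T_k=(1-q^S)U_k$ and the resulting inequality $(q^{R-S}+q^R+q^{R+S})B(q)\succeq B(q)-(q^R;q^R)_\infty$ for $k=2$ are equivalent to the paper's recursion $a_{k+1}(n)=y_k(n)-a_k(n)$, since $y_1$ has generating function $q^{R-S}(1+q^S+q^{2S})B(q)$. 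The $k=1$ case is complete and matches the paper's argument verbatim.

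For $k=2,3$, however, what you have written is a strategy, not a proof. The three moves you list for $k=2$ (delete $R-S$; strip $R$ from a part $\ge 2R-S$; delete $R+S$) have overlapping domains, and you neither fix a priority rule nor check that the resulting map is injective. The paper avoids this by using a single target set $\mathcal Y_1(n)$ in which up to two copies of $S$ are allowed: the multiplicity $m_\mu(S)\in\{0,1\}$ then serves as a case marker separating the images, and an extra part $S$ is \emph{inserted} in the non-$(R-S)$ branch rather than working with three disjoint shifts. Your handling of the pentagonal exponents is also not correct as stated: at $m=2R$ (an odd-pentagonal exponent) there is no partition of $m-R=R$ into parts $\equiv\pm S\pmod R$ all of size $\ge 2R-S$, so the witness you describe does not exist; the paper instead exhibits an element of $\mathcal Y_1(2R)$ with $m_\mu(S)=2$, which lies outside both branches of $\varphi_1$. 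For $k=3$ the paper's injection $\varphi_2$ occupies several pages, with five subcases and separate treatments for $S<R/2$, $S>R/2$, and $S=R/2$; ``peeling off the layer twice'' does not substitute for this analysis. In short, the architecture is right and coincides with the paper's, but the substantive content of the $k=2,3$ proofs---the explicit injections and the verification at pentagonal exponents---is missing.
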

 
  We define   \begin{align}\label{trunc-equiv} \sum_{n=0}^\infty a_{S,R,k}(n)q^n:=\frac{(-1)^{k-1}}{(q^S;q^R)_\infty(q^{R-S};q^R)_\infty}& \notag \sum_{j=0}^{k-1} (-1)^jq^{j(j+1)R/2-jS}(1-q^{(2j+1)S})\\ & -(-1)^{k-1}(q^R;q^R)_\infty, \end{align} and we write  $a_k(n)$ for $a_{S,R,k}(n)$ when $S$ and $R$ are clear from the context. 
  As mentioned in \cite{M1}, Conjecture \ref{Conj 4.3} is equivalent to showing $a_{S,R,k}(n)\geq 0$ for all $n\geq 0$, $k\geq 1$, $1\leq S<R$. 
 
 In this article we prove Conjecture \ref{Conj 4.3} for $k\in \{1,2,3\}$. 
 
 \begin{theorem}\label{main} If $k\in \{1,2,3\}$ and $1\leq S<R$, then the theta series \eqref{conj 4.3} has non-negative coefficients. 
 \end{theorem}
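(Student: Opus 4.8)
The plan is to prove the equivalent statement that $a_k(n)\ge 0$ for all $n\ge 0$ when $k\in\{1,2,3\}$. \emph{Step 1: reduction to a polynomial numerator.} In each summand of \eqref{trunc-equiv} factor $1-q^{(2j+1)S}=(1-q^S)\sum_{t=0}^{2j}q^{tS}$ and cancel the common factor $1-q^S$ against $(q^S;q^R)_\infty=(1-q^S)(q^{R+S};q^R)_\infty$, which gives $\sum_{n}a_k(n)q^n=(-1)^{k-1}\big(H_kG-(q^R;q^R)_\infty\big)$, where $G:=1/\big((q^{R+S};q^R)_\infty(q^{R-S};q^R)_\infty\big)$ and $H_k:=\sum_{j=0}^{k-1}(-1)^jq^{Rj(j+1)/2-jS}\sum_{t=0}^{2j}q^{tS}$ is a polynomial; concretely $H_1=1$, $H_2=1-q^{R-S}-q^R-q^{R+S}$, and $H_3=H_2+q^{3R-2S}+q^{3R-S}+q^{3R}+q^{3R+S}+q^{3R+2S}$. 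Writing $c(m):=[q^m]\,G$ (the number of partitions of $m$ into parts $\equiv\pm S\pmod R$ having no part equal to $S$) and $e(m):=[q^m](q^R;q^R)_\infty$, which by Euler's pentagonal number theorem \eqref{epnt} equals $(-1)^j$ if $m=Rj(3j-1)/2$ and is $0$ otherwise, the claim $a_k(n)\ge 0$ becomes, for $k=1,2,3$ respectively: $c(n)\ge e(n)$; $\ c(n-(R-S))+c(n-R)+c(n-(R+S))+e(n)\ge c(n)$; and $\ c(n)+\sum_{t=0}^{4}c\big(n-3R-(t-2)S\big)\ge c(n-(R-S))+c(n-R)+c(n-(R+S))+e(n)$.

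\emph{Step 2: the case $k=1$.} Since $c(n)\ge 0$, we only need $c(n)\ge 1$ whenever $e(n)=1$, i.e.\ for $n=0$ and for $n=Rm$ with $m=j(3j-1)/2$ for some even $j\ne 0$ (so $m\ge 5$). This holds because $c(0)=1$ and, for every $m\ge 2$, the multiple $Rm$ is a sum of parts taken from $\{R-S,R+S,2R-S,2R+S\}$: use copies of $(R-S)+(R+S)=2R$ when $m$ is even, and one block $(R-S)+(2R+S)=3R$ together with such pairs when $m$ is odd. Hence $a_1(n)\ge 0$.

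\emph{Step 3: the cases $k=2,3$.} These we prove by explicit injections between sets of partitions into parts $\equiv\pm S\pmod R$ with no part equal to $S$. For $k=2$: given such a partition $\mu$ of $n$, delete a part $R-S$ if one occurs; otherwise delete a part $R+S$ if one occurs; otherwise every part of $\mu$ is at least $2R-S$ and we subtract $R$ from its smallest part (which keeps it $\equiv\pm S\pmod R$ and $\ge R-S$). This injects the partitions of $n$ into (those of $n-(R-S)$) $\sqcup$ (those of $n-R$) $\sqcup$ (those of $n-(R+S)$), which already yields the $k=2$ inequality when $e(n)\in\{0,1\}$; when $e(n)=-1$ (so $n=Rm$ with $m$ an odd-indexed generalized pentagonal number) the image inside the block of partitions of $n-(R+S)$ omits the $c(n-2R)\ge 1$ partitions that still contain a part $R-S$, which supplies the needed slack, except for the single value $n=R$, where a direct count gives $a_2(R)=c(S)-c(R)=0$ (both $c(S)$ and $c(R)$ equal $1$ if $(m+1)S=mR$ for some $m\ge 1$, and $0$ otherwise).

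\emph{The main obstacle} is $k=3$, where the inequality requires a \emph{lower} bound on its left side, so the injection must be built in the expanding direction: from (partitions of $n-(R-S)$) $\sqcup$ (partitions of $n-R$) $\sqcup$ (partitions of $n-(R+S)$) $\sqcup$ (a one-point set when $e(n)=1$) into (partitions of $n$) $\sqcup$ (the five sets of partitions of $n-3R-(t-2)S$, $0\le t\le 4$). The degree-$3R$ block of $H_3$ is exactly what provides the room to absorb the overlaps among the three source blocks (the partitions of $n$ reachable from more than one of them). The delicate points are that there is no ``part of size $R$'' to insert or delete, since $R\not\equiv\pm S\pmod R$, which forces the maps involving the shift by $R$ to rescale an existing part; and that these moves must be arranged uniformly over all residues of $n$ modulo $R$, over the possible sizes of the smallest part (the relevant thresholds change according as $S<R/2$ or $S>R/2$), and over the arithmetic progression of pentagonal exceptions where $e(n)=\pm 1$, with a few small values of $n$ treated by direct computation.
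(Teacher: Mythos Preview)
Your Step~1 reduction is correct and is in fact the same reduction the paper uses: the paper's identity for $a_k(n)+a_{k+1}(n)$ is exactly your factoring of $1-q^{(2j+1)S}$, and unpacking the paper's $y_1(n)$ one finds $y_0(n)=c(n)$ and $y_1(n)=c(n-(R-S))+c(n-R)+c(n-(R+S))$, so your coefficient inequalities coincide with the paper's. Your $k=1$ argument is fine and equivalent to the paper's. Your $k=2$ injection is a legitimate variant of the paper's $\varphi_1$ (you split the paper's Case~II into two subcases and route them to different target blocks); the slack estimate via $c(n-2R)$ for $n\ne R$ is correct, and the direct check at $n=R$ is right.

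The genuine gap is $k=3$. You describe the shape of the desired injection and enumerate the ``delicate points,'' but you do not construct the map or verify injectivity and disjointness of images. This is precisely where the work lies: in the paper, the $k=3$ case occupies the bulk of the proof, with a case split (i)/(ii)/(iii)(a)/(iii)(b) on the multiplicities $m_\lambda(R-S)$ and $m_\lambda(S)$, further branching inside (iii)(b) via an auxiliary partition $\xi$ and a threshold $\xi_{\ell-2}\lessgtr 2R+S$, an explicit check that the images across all subcases are pairwise disjoint, and exhibiting concrete elements of the leftover set $\mathcal B_2(n)$ at each even-index pentagonal value of $n/R$. The paper then repeats the entire analysis for $S>R/2$ with modified threshold conditions (e.g.\ $\mu_{\ell-4}-\mu_{\ell-3}\ge 2R-2S$ replacing $\ge R$). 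None of this is routine bookkeeping that can be waved away; the disjointness of the images, in particular, depends on the precise form of each case's output conditions. As written, your proposal proves $k\in\{1,2\}$ but not $k=3$.

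A secondary issue: your framework implicitly assumes $S\ne R/2$, since otherwise $R-S=S$ and ``delete a part $R-S$'' collides with the exclusion of $S$ in the definition of $c$. The paper handles $S=R/2$ separately, either via two-colored partitions or by rescaling to the case $R=2$, $S=1$.
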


 Let\begin{equation}\label{eqn:rho}
 \rho_{R}(n):=\begin{cases}(-1)^i & \text{ if } n=Ri(3i-1)/2 \text{ for some } i\in \mathbb Z \\ 0 & \text{ otherwise.}\end{cases}
 \end{equation}
 Then, \eqref{epnt} with $q$ replaced by $q^R$,   implies that $(q^R;q^R)_\infty$ is the generating function for the sequence $\rho_R(n)$.
 
 When $R=3, S=1$, 
 for all $n\geq 0, k\geq 1$, we have  \begin{equation}\label{conj 4.3-b3} a_{1,3,k}(n)=(-1)^{k-1}\sum_{j=-(k-1)}^k(-1)^j b_3(n-j(3j-1)/2) - (-1)^{k-1}\rho_3(n), \end{equation} where $b_\ell(n)$ denotes the number of $\ell$-regular partitions of $n$, i.e., partitions with no parts divisible by $\ell$.

\begin{remark} From \cite[Lemma 1.2]{AM} it follows that the $q$-series on the right hand side of  \eqref{trunc-equiv} for $R=3, S=1$ equals $$(q^3;q^3)_\infty\sum_{n\geq 0}^\infty M_k(n)q^n.$$  Hence, if Conjecture \ref{Conj 4.3} is true for $R=3$ and $S=1$, then for $k\geq 1$, $$\sum_{j=0}^\infty (-1)^j(M_k(n-T_{3j})-M_k(n-T_{3j+2})) \geq 0,$$ 
where $T_j=j(j+1)/2$ is the $j$th triangular number.
 \end{remark} 
 
  We prove  the following related theorem, which is weaker than Conjecture \ref{Conj 4.3} for $R=3$, $S=1$.

\begin{theorem}\label{thm:b3} Let $k\geq 1$. Then for all $n\geq 0$, $$(-1)^{k}\sum_{j=-k}^k(-1)^j b_3(n-j(3j-1)/2) +(-1)^{k-1} \rho_{3}(n)\geq 0.$$
\end{theorem}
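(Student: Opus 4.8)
The plan is to reduce Theorem~\ref{thm:b3} to an elementary monotonicity property of $b_3$. Write $P_j:=j(3j-1)/2$ for the generalized pentagonal numbers, so that \eqref{epnt} says $\sum_{j\in\Z}(-1)^jq^{P_j}=(q;q)_\infty$. Multiplying this by the generating function $\sum_{m\ge 0}b_3(m)q^m=(q^3;q^3)_\infty/(q;q)_\infty$ and comparing coefficients with $\sum_{n\ge 0}\rho_3(n)q^n=(q^3;q^3)_\infty$ gives, for every $n\ge 0$,
$$\sum_{j\in\Z}(-1)^jb_3(n-P_j)=\rho_3(n),$$
with the convention that $b_3$ of a negative integer is $0$; for fixed $n$ this is a finite sum since $P_j\to\infty$. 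Subtracting off the terms with $|j|\le k$, using $(-1)^{-i}=(-1)^i$ together with $P_{-i}=i(3i+1)/2$ to merge the two tails, and using $(-1)^{k-1}=-(-1)^k$, one checks that the left-hand side of Theorem~\ref{thm:b3} equals
$$\sum_{j\ge k+1}(-1)^{j-k-1}\bigl(b_3(n-P_j)+b_3(n-P_{-j})\bigr)=c_{k+1}(n)-c_{k+2}(n)+c_{k+3}(n)-\cdots,$$
where $c_j(n):=b_3(n-P_j)+b_3(n-P_{-j})\ge 0$ and $c_j(n)=0$ for all sufficiently large $j$.

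It therefore suffices to show that the sequence $\bigl(c_j(n)\bigr)_{j\ge k+1}$ (which is non-negative and eventually zero) is non-increasing, since then its alternating sum, which begins with a $+$ sign, is non-negative. For this I would use the exact gaps $P_{j+1}-P_j=3j+1$ and $P_{-(j+1)}-P_{-j}=3j+2$. Both $3j+1$ and $3j+2$ are coprime to $3$, so appending one part equal to $3j+1$ (respectively $3j+2$) to a $3$-regular partition again produces a $3$-regular partition; this is where it matters that $3$-regular partitions may have repeated parts. Appending that part gives an injection from the $3$-regular partitions of $n-P_{j+1}$ into those of $n-P_j$, and from the $3$-regular partitions of $n-P_{-(j+1)}$ into those of $n-P_{-j}$. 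Hence $b_3(n-P_j)\ge b_3(n-P_{j+1})$ and $b_3(n-P_{-j})\ge b_3(n-P_{-(j+1)})$, so $c_j(n)\ge c_{j+1}(n)$ for every $j\ge 1$, which proves the theorem.

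The argument is elementary, and I do not anticipate a genuine obstacle; the one point that needs care is the bookkeeping in the first paragraph, namely correctly collapsing the $|j|>k$ tail of the identity $\sum_j(-1)^jb_3(n-P_j)=\rho_3(n)$ and tracking the sign of the $(-1)^{k-1}\rho_3(n)$ term so that the telescoping pattern $+,-,+,-,\dots$ comes out right. The same computation also explains why Theorem~\ref{thm:b3} is weaker than Conjecture~\ref{Conj 4.3} for $R=3$, $S=1$: its left-hand side equals $a_{1,3,k+1}(n)+b_3\bigl(n-(k+1)(3k+2)/2\bigr)$, which is non-negative as soon as $a_{1,3,k+1}(n)\ge 0$.
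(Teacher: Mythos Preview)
Your proof is correct and takes a genuinely different route from the paper's. The paper quotes an explicit Shanks-type identity of Merca \cite[Corollary~17]{M4}, multiplies both sides by $(q^3;q^3)_\infty$, and writes the generating function for the left-hand side of Theorem~\ref{thm:b3} as a sum $H_1(q)+H_2(q)$ of two series with manifestly non-negative coefficients; it then reads off a combinatorial interpretation in terms of $(k+2,3)$-Durfee rectangles of $3$-regular partitions. By contrast, you use only Euler's pentagonal number theorem to write the left-hand side as the alternating tail $\sum_{j\ge k+1}(-1)^{j-k-1}c_j(n)$, and then observe that $c_j(n)\ge c_{j+1}(n)$ because appending a single part $3j+1$ (respectively $3j+2$), which is coprime to $3$, gives an injection on $3$-regular partitions. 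Your argument is more elementary and entirely self-contained (no appeal to \cite{M4}), and in fact the same monotonicity trick proves the inequality in Theorem~\ref{thm:b6} as well, since $3j+1$ and $3j+2$ are never divisible by $6$; what you lose relative to the paper is the explicit combinatorial description of the left-hand side as $|\mathcal B_3^1(n)|+|\mathcal B_3^2(n)|$. Your final remark about $a_{1,3,k+1}(n)+b_3\bigl(n-(k+1)(3k+2)/2\bigr)$ is exactly the paper's observation that $a_{1,3,k}(n)+b_3\bigl(n-k(3k-1)/2\bigr)\ge 0$, with the index shifted by one.
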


Note that Theorem \ref{thm:b3} together with  \eqref{conj 4.3-b3} shows that for all $n\geq 0$, $k\geq 1$,  $$a_{1,3,k}(n)+b_3(n-k(3k-1)/2)\geq 0.$$ Following the proof of Theorem \ref{thm:b3}  we also give a combinatorial interpretation for $a_{1,3,k}(n)+b_3(n-k(3k-1)/2)$.

Inequalities for $b_6(n)$ analogous to those in Theorem \ref{thm:b3} were conjectured in \cite{BM3}. We give a proof of the conjecture. 
\begin{theorem}{\cite[Conjecture 3]{BM3}}\label{thm:b6}
    For $n\geq 0, k>0$
    \begin{equation}\label{eq:b6-conj}(-1)^{k-1}\left(\rho_{6}(n)-\sum_{j=-k}^k (-1)^jb_6(n-j(3j-1)/2)\right)\geq 0,\end{equation} with strict inequality if $n\geq (k+1)(3(k+1)-1)/2$.
\end{theorem}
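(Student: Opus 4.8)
The plan is to identify the left-hand side of \eqref{eq:b6-conj} with a tail of a theta series and then collapse that tail into an alternating sum of a non-increasing, non-negative sequence. First I would combine \eqref{epnt} with the standard generating function identity $\sum_{n\ge 0}b_6(n)q^n=(q^6;q^6)_\infty/(q;q)_\infty$ and the fact (coming from \eqref{epnt} applied to $q^6$, see \eqref{eqn:rho}) that $\sum_{n\ge 0}\rho_6(n)q^n=(q^6;q^6)_\infty$. Multiplying the pentagonal series by $\sum b_6(n)q^n$ shows that $\sum_{j\in\Z}(-1)^jb_6\!\big(n-j(3j-1)/2\big)=\rho_6(n)$ for all $n\ge 0$, so the quantity in \eqref{eq:b6-conj} is $(-1)^{k-1}$ times the tail $\sum_{|j|>k}(-1)^jb_6\!\big(n-j(3j-1)/2\big)$. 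Pairing the indices $j=m$ and $j=-m$ for $m\ge k+1$ and writing
\[
c_m:=b_6\!\big(n-\tfrac{m(3m-1)}{2}\big)+b_6\!\big(n-\tfrac{m(3m+1)}{2}\big)
\]
(with the convention $b_6(N)=0$ for $N<0$), one finds that the left-hand side of \eqref{eq:b6-conj} equals the finite alternating sum $\sum_{m\ge k+1}(-1)^{m-k-1}c_m=c_{k+1}-c_{k+2}+c_{k+3}-\cdots$.

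Next I would show that $c_{k+1}\ge c_{k+2}\ge c_{k+3}\ge\cdots\ge 0$; since an alternating sum of a non-increasing, non-negative, eventually-vanishing sequence is $\ge 0$, this proves \eqref{eq:b6-conj}. The elementary tool is: whenever $N\ge 0$, $t\ge 1$ and $6\nmid t$, one has $b_6(N)\ge b_6(N-t)$, because adjoining one more part equal to $t$ injects the $6$-regular partitions of $N-t$ into those of $N$ — the image is still $6$-regular because $6\nmid t$, and the map is inverted by deleting a single part $t$. Now $c_m-c_{m+1}$ splits as a sum of two differences of exactly this shape, with $(N,t)=\big(n-\tfrac{m(3m-1)}{2},\,3m+1\big)$ and $(N,t)=\big(n-\tfrac{m(3m+1)}{2},\,3m+2\big)$, using $\tfrac{(m+1)(3m+2)}{2}-\tfrac{m(3m-1)}{2}=3m+1$ and $\tfrac{(m+1)(3m+4)}{2}-\tfrac{m(3m+1)}{2}=3m+2$. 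Since $3m+1$ and $3m+2$ are congruent to $1,4$ and $2,5\pmod 6$ respectively (according to the parity of $m$), neither is divisible by $6$, so both differences are $\ge 0$ and hence $c_m\ge c_{m+1}$.

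For the strict inequality when $n\ge (k+1)\big(3(k+1)-1\big)/2$, set $N_0:=n-\tfrac{(k+1)(3k+2)}{2}\ge 0$ and $t_0:=3(k+1)+1=3k+4\ (\ge 7)$. Then $b_6(N_0)\ge 1$, and if $N_0\ge t_0$ the injection above for the pair $(N_0,t_0)$ is not onto, since the all-ones partition of $N_0$ has no part equal to $t_0$; while if $N_0<t_0$ then trivially $b_6(N_0-t_0)=0<b_6(N_0)$. In either case $c_{k+1}-c_{k+2}\ge b_6(N_0)-b_6(N_0-t_0)\ge 1$, and grouping the remaining terms of $\sum_{m\ge k+1}(-1)^{m-k-1}c_m$ into consecutive non-negative pairs gives a total $\ge c_{k+1}-c_{k+2}\ge 1>0$, which is strictness in the required range.

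The argument is short once the tail has been isolated; the steps needing care are the sign bookkeeping that converts the truncation into $\sum_{|j|>k}$, and the elementary but essential observation that the gaps $3m+1$ and $3m+2$ between consecutive generalized pentagonal numbers are never divisible by $6$ — this is precisely what legitimizes the ``add a part'' injection for $6$-regular partitions, and it parallels the corresponding step in the proof of Theorem~\ref{thm:b3}. I expect the only real subtlety to be packaging the monotonicity $c_m\ge c_{m+1}$ so that it simultaneously delivers the strict inequality for $n\ge(k+1)(3(k+1)-1)/2$; everything else is routine.
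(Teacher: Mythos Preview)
Your argument is correct, and it is genuinely different from the paper's proof. The paper does not isolate the tail $\sum_{|j|>k}(-1)^j b_6(n-j(3j-1)/2)$ and then appeal to monotonicity of $b_6$; instead it multiplies Merca's Shanks-type identity \eqref{Shanks} through by $(q^6;q^6)_\infty$ and reads off that the generating function for the left side of \eqref{eq:b6-conj} equals
\[
\frac{q^{k(3k+5)/2+1}(-q^3;q^3)_\infty}{(q^2;q^3)_\infty}\sum_{n\ge 0}\frac{q^{n(3n+3k+4)}}{(q^3;q^3)_n(q;q^3)_{n+k+1}}
\;+\;\frac{q^{k(3k+7)/2+2}(-q^3;q^3)_\infty}{(q;q^3)_\infty}\sum_{n\ge 0}\frac{q^{n(3n+3k+5)}}{(q^3;q^3)_n(q^2;q^3)_{n+k+1}},
\]
which is manifestly non-negative term by term. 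Strictness for $n\ge (k+1)(3k+2)/2$ then follows because the lowest power present is $k(3k+5)/2+1=(k+1)(3k+2)/2$ and, since parts $1$ and $2$ are both available in the relevant factor, every coefficient from that point on is positive; this also yields the combinatorial interpretation via the sets $\mathcal B_3^{i*}(n)$.

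Your route is more elementary and self-contained: it avoids the external Shanks identity entirely and rests only on Euler's pentagonal number theorem plus the single observation that the gaps $3m+1,\,3m+2$ between consecutive generalized pentagonal numbers are never multiples of $6$, so the ``add a part'' injection applies. The paper's route, by contrast, buys an explicit closed form for the generating function and a combinatorial description of the left side of \eqref{eq:b6-conj}. One small expository point: your remark that this ``parallels the corresponding step in the proof of Theorem~\ref{thm:b3}'' is off --- the paper proves Theorem~\ref{thm:b3} by the same Shanks-identity manipulation, not by a monotonicity/injection argument --- but this does not affect the validity of your proof.
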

We also give a combinatorial interpretation of the numbers on the left hand side of \eqref{eq:b6-conj}.
\begin{remark} In \cite[Conjecture 3]{BM3}, the inequality \eqref{eq:b6-conj} is conjectured to be strict for $n\geq k(3k+1)/2$. This is, in fact, not true as seen from the proof of Theorem \ref{thm:b6}.
\end{remark} 

When $R=4, S=1$, by considering the parity of triangular numbers, we see that Conjecture \ref{Conj 4.3}  is equivalent to Conjecture 4 in \cite{M3}, which states that for $k\geq 1$ the $q$-series $$(-q;q)_\infty\sum_{n=0}^\infty(-1)^{T_n}q^{T_{n+2k}}$$ has non-negative coefficients.  Here we used Euler's identity $$\frac{1}{(q;q^2)_\infty}=(-q;q)_\infty.$$ 

We prove that a weaker version of the conjecture is eventually true. As usual, we denote by $Q(n)$ the number of  partitions of $n$ with distinct parts, which by Euler's identity is also the number of partitions of $n$ with odd parts. 
Let $ c_1(n)=Q(n)$   and, if $k\geq 2$, let $c_k(n)$ be the sequence whose generating function is \begin{equation}\label{ck} \sum_{n=0}^\infty c_k(n)q^n= (-1)^{k-1}(-q;q)_\infty\left((-1)^{k-1}q^{T_{2(k-1)}}+\sum_{j=0}^{k-2} (-1)^j(q^{T_{2j}}-q^{T_{2j+1}})\right).\end{equation}

\begin{theorem} \label{thm:Q} If $k\geq 1$ there exists $N_k$ such that  $c_k(n)\geq 0$ if $n\geq N_k$. Moreover, $N_k$ can be given explicitly. 
    \end{theorem}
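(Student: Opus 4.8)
The strategy is to analyze the generating function \eqref{ck} directly as a truncated theta series times $(-q;q)_\infty$, and to show that the positive contributions of the "tail" eventually dominate any negative contributions. First I would rewrite, using Euler's identity $(-q;q)_\infty = 1/(q;q^2)_\infty$ and the Jacobi triple product in the form \eqref{JTP_RS_M} with $R=4$, $S=1$, the series $\sum_{n=0}^\infty c_k(n)q^n$ as
\[
(-1)^{k-1}(-q;q)_\infty\sum_{j=k}^\infty(-1)^j q^{2j(j+1)-j}\bigl(1-q^{2j+1}\bigr),
\]
so that $c_k(n)$ is, up to sign, a partial sum of the JTP theta coefficients convolved with $Q(n)$. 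The point of pushing the truncation index out to $j\geq k$ (rather than keeping the finite sum in \eqref{ck}) is that the analytic behavior is governed by the smallest exponent appearing, namely the term $j=k$ contributing $+(-q;q)_\infty q^{T_{2k}}$ with a positive sign after accounting for $(-1)^{k-1}(-1)^k(-1)=+1$; I would check the sign bookkeeping carefully here since $T_{2(k-1)}$ versus $T_{2k}$ and the alternating signs are the delicate part of matching \eqref{ck} to the shifted form.

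Next I would isolate the dominant block: write $c_k(n) = Q(n-T_{2k}) - [\text{terms with larger exponents}]$, where the bracketed terms are $\pm Q(n-m)$ for various $m > T_{2k}$ coming from the $1-q^{2j+1}$ factors and the $j\geq k+1$ terms. Because the theta exponents $T_{2j}$ and $T_{2j+1}$ grow quadratically, for fixed $k$ only finitely many of these exceed a given bound relative to $T_{2k}$ within any fixed range, but one cannot simply truncate — one needs a uniform comparison. The key analytic input is a monotonicity/growth estimate for $Q$: there is a constant (indeed $Q(n+1)\geq Q(n)$ for $n$ large, and more usefully $Q(n) - Q(n-1) \to \infty$, or the cruder bound that $Q$ is eventually increasing and $Q(n-a) - Q(n-b) \geq 0$ once $n$ is large enough when $a<b$). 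Using such an estimate, I would bound the negative part of $c_k(n)$ by a bounded number of $Q$-values at arguments strictly smaller than $n - T_{2k}$, and show their total is dominated by $Q(n-T_{2k})$ once $n \geq N_k$. Making $N_k$ explicit then amounts to making the growth estimate for $Q$ explicit — e.g. using a known lower bound for $Q(n)-Q(n-1)$ or an Euler-type recurrence for $Q(n)$ (the pentagonal-type recurrence $Q(n) = \sum (-1)^{\cdots} Q(n - \cdots)$ arising from $(-q;q)_\infty = (q^2;q^2)_\infty/(q;q)_\infty$ combined with the pentagonal number theorem), and tracking the constants through the finitely many competing exponents determined by $k$.

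The main obstacle I anticipate is the uniform domination step: controlling the alternating sum of $Q$-values at the shifted arguments $T_{2j}, T_{2j+1}$ for $j\geq k$ — an infinite alternating sum — and showing its partial sums stay below $Q(n-T_{2k})$. One clean way around this is to use the combinatorial positivity already available: the full (non-truncated) series sums to $(q^4;q^4)_\infty \cdot (\text{something})$ via \eqref{JTP_RS_M}, so the infinite tail beyond any point has a sign-definite combinatorial meaning analogous to $M_k(n)$ in the $R=3$ case; pairing adjacent terms $q^{T_{2j}}(1-q^{2j+1})$ shows each block $(-q;q)_\infty q^{T_{2j}}(1-q^{2j+1})$ contributes $Q(n-T_{2j}) - Q(n-T_{2j}-2j-1) \geq 0$ for $n$ large (again by eventual monotonicity of $Q$), so the entire tail $j\geq k+1$ has the same sign as its leading block and can be absorbed, leaving only the $j=k$ block $Q(n-T_{2k}) - Q(n-T_{2k}-2k-1)$, which is $\geq 0$ precisely once $n - T_{2k}$ is past the point where $Q$ is monotone over an interval of length $2k+1$. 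This reduces the explicit $N_k$ to: the threshold beyond which $Q(m) \geq Q(m-1) \geq \cdots \geq Q(m - (2k+1))$, plus $T_{2k}$. I would then cite or derive an explicit such threshold for $Q$ (quadratic in $k$, linear-ish corrections) to complete the proof.
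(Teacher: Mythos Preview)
Your opening rewrite is incorrect, and this undermines the rest of the plan. The series in \eqref{ck} is already a \emph{finite} polynomial times $(-q;q)_\infty$; there is no $(q^4;q^4)_\infty$ term present, so invoking the Jacobi triple product does not convert it cleanly into the Conjecture~\ref{Conj 4.3} tail. Concretely, if you substitute the full theta sum $\sum_{j\ge 0}(-1)^j(q^{T_{2j}}-q^{T_{2j+1}})=(q^4;q^4)_\infty/(-q;q)_\infty$ and peel off the tail from $j\ge k-1$, then after combining with the standalone $(-1)^{k-1}q^{T_{2(k-1)}}$ you get
\[
\sum_{n\ge 0} c_k(n)q^n=(-1)^{k-1}(q^4;q^4)_\infty+(-q;q)_\infty\,q^{T_{2k-1}}+(-1)^{k}(-q;q)_\infty\sum_{j\ge k}(-1)^j\bigl(q^{T_{2j}}-q^{T_{2j+1}}\bigr),
\]
which differs from your claimed expression both in the two extra terms and in the sign on the tail (your bookkeeping ``$(-1)^{k-1}(-1)^k(-1)=+1$'' smuggles in an unexplained $-1$). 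In particular the term $Q(n-T_{2k})$ you single out as dominant does not occur in $c_k(n)$ at all: unwinding \eqref{ck} directly gives $c_k(n)$ as a signed sum of $Q(n-T_j)$ for $0\le j\le 2k-2$ only, with $Q(n)$ itself carrying sign $(-1)^{k-1}$. For even $k$ the largest single term is therefore \emph{negative}, so no ``leading term dominates'' argument can work. Even for the tail on its own, your block-pairing step is incomplete: knowing each block $Q(n-T_{2j})-Q(n-T_{2j+1})\ge 0$ does not control the alternating sum $\sum_{j\ge k}(-1)^{j-k}[\cdots]$; you would also need the blocks to be decreasing in $j$, which is a genuinely quantitative statement about $Q$, not mere eventual monotonicity.

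The paper's route is quite different and much shorter. Since \eqref{ck} is already a finite sum, $c_k(n)\ge 0$ unwinds directly (the paper records the two cases according to the parity of $k$) into a linear inequality of the shape $\sum_i Q(n-a_i)\ge \sum_j Q(n-b_j)$ with one more term on the left than on the right. The paper then invokes Theorem~1.11 of Ballantine--Burson--Craig--Folsom--Wen (a general result on eventual validity of linear partition inequalities), which immediately yields $c_k(n)\ge 0$ for $n\ge N_k$ together with an explicit $N_k$. No JTP, no tail rewriting, and no hand estimates on $Q$ are used.
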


This article is structured as follows. In section \ref{notation}, we introduce relevant background and notation used throughout. In section \ref{sec-main}, we prove Theorem \ref{main} and discuss applications in the case $R=4, S=1$. In particular, in Proposition \ref{prop-mex} we give a connection with the odd minimal excludant of  partitions into odd parts. In section \ref{weak-conj}, we prove Theorems  \ref{thm:b3},  \ref{thm:b6}, and \ref{thm:Q}. In section \ref{sec-JTP} we prove combinatorially an identity related to the Jacobi Triple product identity. In particular cases, this identity reduces to identities (6.7), (6.10), (6.12), and (6.13) conjectured in \cite{M1}.  Hence, we have combinatorial proofs of Merca's identities. In section \ref{distinct-reg}, we use the Weierstrass addition formula \cite[Identity (8.1)]{KMR} to prove an identity involving the number of $5$-regular distinct partition of $n$.   Finally, in section \ref{concluding}, we offer some concluding remarks.

 \section{Preliminaries and notation}\label{notation}
 In this section we discuss some background on partitions and set the notation used frequently in the article.  

We write $\lambda \vdash n$ to mean that $\lambda$ is a partition of $n$. We  say that $\lambda$ has size $n$ if the parts of $\lambda$ add up to $n$ and write $|\lambda|$ for the size of $\lambda$. The length of $\lambda$, denoted $\ell(\lambda)$, is the number of parts in $\lambda$. We denote by $\ell_e(\lambda)$ (respectively $\ell_o(\lambda)$) the number of even (respectively odd) parts in $\lambda$. 
We also work with vectors of partitions and write $(\lambda^{(1)}, \lambda^{(2)}, \ldots, \lambda^{(t)})\vdash n$ to mean that $\lambda^{(1)}, \lambda^{(2)}, \ldots, \lambda^{(t)}$ are partitions with $|\lambda^{(1)}|+|\lambda^{(2)}|+\cdots+|\lambda^{(t)}|=n.$

  We denote by $\mathcal P(n)$ the set of all partitions of $n$.  In general, if $\mathcal A(n)$ denotes a set of partitions of $n$ satisfying certain conditions, we set $\mathcal A:=\bigcup_{n\geq 0}\mathcal A(n).$ Moreover, if $f$ is a partition counting function, say $f(n)=|\mathcal A(n)|$, we use the convention that $f(x)=0$ if $x\not\in \mathbb N_0$. We use the notation  $\mathbb N_0$ for the set of non-negative integers. 
  
The {Ferrers diagram} of a partition $\lambda=(\lambda_1, \lambda_2, \ldots, \lambda_\ell)$ is an array of left justified boxes such that the $i$th row from the top contains $\lambda_i$ boxes. For example,  the Ferrers diagram of $\lambda=(5, 3, 2, 2, 1)$  is shown below  \medskip

 \begin{center}\small{\ydiagram[*(white)]
{5,3,2,2,1}}\end{center}
\medskip

We abuse notation and write $\lambda$ to mean a partition, its Ferrers diagram, or  its  multiset of  parts.   We write $a\in \lambda$ to mean the positive integer $a$ is a part of $\lambda$. The number of times $a>0$ appears in $\lambda$ is denoted by $m_\lambda(a)$ and is called the multiplicity of $a$ in $\lambda$. When convenient, we use the frequency notation for partitions: the exponent of a part is the multiplicity of the part. For example, $\lambda=(5^2, 3^4, 1)$ is the partitions $(5,5,3,3,3,3,1)$. 
We also use the convention that $\lambda_j=0$ for all $j>\ell(\lambda)$ and set $\lambda_0:=\infty$.   

We define the following operations on partitions. If   $\lambda$ and $\mu$ are partitions, by $\lambda\cup \mu$ and $\lambda\setminus \mu$ we mean the corresponding operations on the multisets of parts of $\lambda$ and $\mu$. Moreover $\lambda\setminus \mu$ is only defined if $\mu \subseteq \lambda$ as multisets. 

Given an integer $R\geq 2$, and a partition $\lambda$, the $R$-modular diagram of $\lambda$ is a Ferrers diagram with $\ell(\lambda)$ rows such that, if $\lambda_i=q_iR+S_i$ for $q_i\geq 0$ and $0< S_i\leq R$, then the $i$th row from the top has $q_i+1$ boxes: the first $q_i$ boxes are filled with $R$ and the last box is filled with $S_i$. 
For example,  if $R=3$ and $\lambda=(10, 9, 8,8, 7, 3, 2)$, the $3$-modular diagram of $\lambda$ is 
 $${\young(3331,333,332,332,331,3,2)}$$

Given  integers $R\geq 2$ and $0<S<R$, we denote by $\mathcal P_{\pm S,R}(n)$ the set of partitions of $n$ with parts congruent to $\pm S$ modulo  $R$. For $j\geq 1$,  we denote by $\delta_{S,R, j}$ the  partition in $\mathcal P_{\pm S,R}$ with distinct parts equal to $iR+S$ for $0\leq i\leq j-1$. If $j=0$,  we set $\delta_{S,R,0}:=\emptyset$. The $R$-modular Ferrers diagram of $\delta_{S, R, j}$  is a staircase shaped diagram with $j$ rows,  each  having  $S$ in the last box. For example, the $5$-modular diagram of $\delta_{3, 5, 4}$ is $$\young(5553,553,53,3)$$

Moreover, $|\delta_{S, R, j}|=j(j-1)R/2+jS$. If $R$ is clear from the context, we drop the subscript $R$ from $\delta_{S,R, j}$ to write $\delta_{S, j}$. 

 For a partition $\lambda$, we write $\lambda_{\ell-i}$ for the $(i+1)^{\textrm{st}}$ part counted from the last: $\lambda_\ell$ is the last part, $\lambda_{\ell-1}$ is the second to last part, etc. 
 
 If $r\geq 2$ and $0\leq s<r$ we write $\lambda^{s,r}$ for the partitions whose parts are the parts of $\lambda$ that are congruent to $s$ modulo $r$ (with the same multiplicity as in $\lambda$) and we set $\lambda^{\pm s, r}:=\lambda^{s,r}\cup \lambda^{r-s, r}$. 

 We use the customary shorthand notation $$(a_1,a_2, \ldots, a_j;q)_\infty:=(a_1;q)_\infty(a_2;q)_\infty\cdots(a_j;q)_\infty.$$

 \section{Proof of Theorem \ref{main} and applications}\label{sec-main}

\begin{proof}[Proof of Theorem \ref{main}] Let $R\geq 2$ and $0<S<R$. First we consider the case $S\neq R/2$. 
Recall that  Conjecture \ref{Conj 4.3}  states that, for all $k\geq 1$, the sequence $a_k(n)$ defined by \eqref{trunc-equiv} has non-negative terms. We prove the conjecture for $k\in \{1,2,3\}$ using combinatorial arguments.

 Let $a_0(n)=\rho_R(n)$. 
 From \eqref{eqn:rho}, we see that $$a_0(n)=\begin{cases} 1 & n=Ri(3i-1)/2 \text{ for $i$ even}\\
 -1 & n=Ri(3i-1)/2 \text{ for $i$ odd}
 \\
 0 & \text{otherwise.}\end{cases}$$
 
  The  sequence  $Ri(3i-1)/2$ with $i$  even starts: $0, 5R, 7R, 22R, 26R, \ldots.$\\
The sequence  $Ri(3i-1)/2$ with $i$  odd starts: $R, 2R, 12R, 15R, \ldots.$

For $k\geq 0$, \begin{align} \notag\sum_{n=0}^\infty \big(a_k(n)+ a_{k+1}(n)\big)q^n & =\frac{q^{k(k+1)R/2-kS}}{(q^S,q^{R-S};q^R)_\infty} (1-q^{(2k+1)S})\\ \label{sum of terms} & = \frac{q^{k(k+1)R/2-kS}}{(q^{R+S},q^{R-S};q^R)_\infty} (1+q^S+q^{2S}+\cdots +q^{2kS}). \end{align}

The $q$-series \eqref{sum of terms} is the generating function for $y_k(n):=|\mathcal Y_k(n)|$, where  
 $$\mathcal Y_k(n):=\{(\lambda, \delta_{R-S,R, k})\vdash n\mid \lambda\in \mathcal P_{\pm S,R},0\leq m_\lambda(S)\leq 2k\}.$$ Hence, for $k\geq 0$, we have $a_{k+1}(n)=y_k(n)-a_k(n)$. \smallskip

\noindent \textbf{Case $k=1$:} Let $n\geq 0$. Then $a_1(n)= y_0(n)-a_0(n)$.
 We see that $\emptyset \in \mathcal Y_0(0)$ and if $n=mR$ with $m\geq 5$, then $\{(m-1)R+S, R-S\}\in \mathcal Y_0(n)$. 
Hence, $y_0(n)\geq a_0(n)$ for all $n\geq 0$ and thus $a_1(n)\geq 0$ for all $n\geq 0$.
\medskip

\noindent \textbf{Case $k=2$:} We have $a_2(n)=y_1(n)-a_1(n)=y_1(n)-y_0(n)+a_0(n).$ If $n=0$, then $\mathcal Y_0(0)=\{(\emptyset, \emptyset)\}$, $\mathcal Y_1(0)=\emptyset$ and $a_0(0)=1$. Thus $a_2(0)=0$.

If $n\geq 1$ we create an injection 
$$\varphi_1: \mathcal Y_0(n) \to \mathcal Y_1(n)=\{(\lambda, \delta_{R-S,1})\vdash n\mid \lambda\in \mathcal P_{\pm S,R}\},   0\leq m_\lambda(S)\leq 2\}.$$ Let $(\lambda,\emptyset)\in \mathcal{Y}_0(n)$. Thus, $m_\lambda(S)=0$. 
\smallskip

\underline{Case I:}  $m_\lambda(R-S)\geq 1$. Define $\varphi_1(\lambda, \emptyset):=(\lambda\setminus (R-S), \delta_{R-S, 1})$.

\smallskip

 \underline{Case II:} $m_\lambda(R-S)=0$. Since $n\geq 1$, we must have $\lambda\neq \emptyset$ and  $$\lambda_\ell\geq \min\{R+S, 2R-S\}.$$
Define $\varphi_1(\lambda, \emptyset):=(\mu, \delta_{R-S, 1})$, where $\mu$ is the partition obtained from $\lambda$ by replacing part $\lambda_\ell$  by parts $\lambda_\ell-R, S$.

In each of Cases I and II, the transformation $\varphi_1$ is invertible on its image. 
Considering $m_\mu(S)$ for $(\mu, \delta_{R-S, 1})\in \varphi_1(\mathcal Y_0(n))$, we see that  the image of $\varphi_1$ in Case I is disjoint from the image of $\varphi_1$ in Case II. Thus, for $n\geq 1$, $y_1(n)-y_0(n)\geq 0$. 

Finally, to see that $a_2(n)\geq 0$ for all $n\geq 0$, we need to show that for each $n=Ri(3i-1)/2$ with $i$ odd,  $\mathcal B_1(n):=\mathcal Y_1(n)\setminus \varphi_1\big(\mathcal Y_0(n)\big)\neq \emptyset$. Identifying partitions with their multiset of parts, we have   $((S), \delta_{R-S,1})\in \mathcal B_1(R)$ and   $(\{R-S,S^2\},\delta_{R-S,1}) \in \mathcal B_1(2R)$. If $n=mR$ with $m\geq 12$, then  $(\{(m-3)R+S, (R-S)^2, S^2\},\delta_{R-S,1})\in \mathcal B_1(mR)$.
Hence, $a_2(n)\geq 0$ for all $n\geq 0$.\smallskip

\medskip

\noindent \textbf{Case $k=3$:} We have $a_3(n)=y_2(n)-a_2(n)=y_2(n)-(y_1(n)-y_0(n))-a_0(n).$ Suppose $S<R/2$. 
First observe that 
\begin{itemize}\item if $n=0$, then $\mathcal Y_0(0)=\{(\emptyset, \emptyset)\}$, $\mathcal Y_1(0)=\mathcal Y_2(0)=\emptyset$ and $a_0(0)=1$. Thus, $a_3(0)=0$. 
\item if $n=R$, then $\mathcal Y_0(R)=\mathcal Y_2(R)=\emptyset$, $\mathcal Y_1(R)=\{((S), \delta_{R-S, 1})\}$ and  $a_0(R)=-1$. Thus, $a_3(R)=0$.   
\item  If $n=2R$, then  $\mathcal Y_0(2R)=\{((R+S, R-S), \emptyset)\}$, $\mathcal Y_1(2R)=\{((R+S), \delta_{R-S, 1}), ((R-S, S^2), \delta_{R-S, 1})\}$, $\mathcal Y_2(2R)=\emptyset$ and     $a_0(2R)=-1$. Thus,  $a_3(2R)=0$. \end{itemize}

Let $n\not \in \{0,R, 2R\}$. 
We create an injection $$\varphi_2:\mathcal B_1(n)\to\mathcal Y_2(n)=\{(\lambda, \delta_{R-S,2})\vdash n \mid \lambda\in \mathcal P_{\pm S,R},   0\leq m_\lambda(S)\leq 4\}.$$ We consider several cases and in each case we describe the image of $\varphi_2$ either precisely or as a subset of a larger set. This will be sufficient to conclude that $\varphi_2$ is an injection.

Let $(\lambda, \delta_{R-S,1})\in \mathcal B_1(n)$. From Case $k=2$,   $\mathcal B_1(n)$  consists of the  pairs $(\mu, \delta_{R-S,1})\in \mathcal Y_1(n)$ with $\mu$ satisfying $\ell(\mu)\geq 3$,  $m_\mu(S)\in \{1,2\}$,  $\mu_{\ell-2}-\mu_{\ell-1}< R$.

\smallskip

\underline{Case (i)} $m_\lambda(R-S)\geq 2$. Thus,  $\ell(\lambda)\geq 3$ and $\lambda_{\ell-2}-\lambda_{\ell-1}<R$. 
We define $\varphi_2(\lambda, \delta_{R-S,1}):=(\lambda\setminus(R-S,R-S, S), \delta_{R-S,2})$.

 We obtain all pairs $(\mu, \delta_{R-S,2})\in \mathcal Y_2(n)$ with 
 $m_\mu(S)\in \{0,1\}$ 

\smallskip

\underline{Case (ii)} $m_\lambda(R-S)= 0$. The condition $\lambda_{\ell-2}-\lambda_{\ell-1}< R$ implies that $m_\lambda(S)=1$.  Moreover,  $\lambda_{\ell-1}\geq R+S$. Define $\varphi_2(\lambda, \delta_{R-S,1}):=(\mu, \delta_{R-S, 2})$, where $\mu$ is the partition obtained from $\lambda$ by replacing parts $\lambda_{\ell-1}$ and $\lambda_{\ell-2}$ by parts $\lambda_{\ell-1}-R$ and $\lambda_{\ell-2}-R, S$.

We obtain all pairs $(\mu, \delta_{R-S, 2})\in \mathcal Y_2(n)$ satisfying all of the following conditions. 
\begin{itemize}\item $\ell(\mu)\geq 4$; \item  $\mu_{\ell-4}-\mu_{\ell-3}\geq R$; \item $\mu_{\ell-3}-\mu_{\ell-2}< R$;
\item $2\leq m_\mu(S)\leq 4$. 
\end{itemize}
\smallskip

\underline{Case (iii)}  $m_\lambda(R-S)= 1$. 
\smallskip

(a)  $m_\lambda(S)=2$.  Since $n\neq 2R$,  we must have $\ell(\lambda)\geq 4.$
Define $\varphi_2(\lambda, \delta_{R-S,1}):=(\mu, \delta_{R-S, 2})$, where $\mu$ is the partition obtained from $\lambda$ by  replacing parts $\lambda_{\ell-3}$ and $R-S$ by part $\lambda_{\ell-3}-R$.

We obtain  pairs $(\mu, \delta_{R-S, 2})\in \mathcal Y_2(n)$ satisfying all of the following conditions. 
\begin{itemize}\item $\ell(\mu)\geq 3$; \item  $\mu_{\ell-3}-\mu_{\ell-2}\geq R$; 
\item $2\leq m_\mu(S)\leq 3$. 
\end{itemize}
\smallskip

(b) $m_\lambda(S)=1$. Then, condition $\lambda_{\ell-2}-\lambda_{\ell-1}<R$ implies that $\lambda_{\ell-2}=R+S$. First, let $\xi$ be the partition obtained from $\lambda$ by replacing parts $R+S$ and $R-S$ by a part equal to $S$. We have $m_\xi(S)=2$ and $\xi_{\ell-2}\geq R+S$. 

If $\ell(\xi)=2$ or $\ell(\xi)\geq 4$ and $\xi_{\ell-i}-\xi_{\ell-i+1}<R$ for $i\in \{3,4\}$, define $\varphi_2(\lambda, \delta_{R-S, 1}):=(\xi,\delta_{R-S, 2})$. 

We obtain  pairs $(\mu, \delta_{R-S,2})\in\mathcal Y_2(n)$  satisfying all of the following conditions.
\begin{itemize}
   \item $\ell(\mu)\neq 3$;
   \item if $\ell(\mu)\geq 4$, then $\mu_{\ell-4}-\mu_{\ell-3}< R$  and $\mu_{\ell-3}-\mu_{\ell-2}< R$;
    \item $m_\mu(S)=2$.
   \end{itemize}

Otherwise, if $\xi_{\ell-2}\leq 2R+S$, define $\varphi_2(\lambda, \delta_{R-S,1}):=(\mu, \delta_{R-S, 2})$, where $\mu$ is the partition obtained from $\xi$ by replacing part $\xi_{\ell-2}$ by parts $\xi_{\ell-2}-R, R-S, S$.

We obtain pairs $(\mu, \delta_{R-S,2})$ in $\mathcal Y_2(n)$ with 
\begin{itemize}
\item $\ell(\mu)\geq 5$;
\item $(m_\mu(S), m_\mu(R-S))\in \{(3,1), (3,2), (4,1)\}$;
\item $\mu_{\ell-4}-\mu_{\ell-3}<R$; \item  $\mu_{\ell-5}-\mu_{\ell-4}\geq R$.
\end{itemize}

If  $\xi_{\ell-2}\geq 2R+R-S$, Define $\varphi_2(\lambda, \delta_{R-S,1}):=(\mu, \delta_{R-S, 2})$, where $\mu$ is the partition obtained from $\xi$ by replacing part $\xi_{\ell-2}$ by parts $\xi_{\ell-2}-2R, R-S,R-S, S,S$. 

We obtain pairs $(\mu, \delta_{R-S,2})$ in $\mathcal Y_2(n)$ with 
\begin{itemize}
\item $\ell(\mu)\geq 7$;
\item $m_\mu(S)=4$ and $2 \leq m_\mu(R-S)\leq 3$;
\item $\mu_{\ell-7}-\mu_{\ell-6}\geq 2R$.
\end{itemize}
The second condition implies $\mu_{\ell-4}-\mu_{\ell-3}<R$ and $\mu_{\ell-5}-\mu_{\ell-4}<R$.
\smallskip

In each case, the transformation $\varphi_2$ is invertible on its image and the images in the different cases are disjoint. 
Thus, for $n\not \in \{0, R, 2R\}$, $y_2(n)-(y_1(n)-y_0(n))\geq 0$. 

Let $n\not \in \{0, R, 2R\}$ and let $\mathcal B_2(n)=\mathcal Y_2(n)\setminus \varphi_2(\mathcal B_1(n))$. We need to show that for $n=Ri(3i-1)/2$ with $i$ even, $\mathcal B_2(n)\neq \emptyset$. 

We have $(((R-S)^2, S^4), \delta_{R-S,2})\in \mathcal B_2(5R)$, $(R+S, (R-S)^3, S^4), \delta_{R-S,2})\in \mathcal B_2(7R)$, and for $m\geq 22$, $((m-8)R-S, (R+S)^2, (R-S)^3, S^4), \delta_{R-S,2})\in \mathcal B_2(mR)$.

The discussion above shows that $\varphi_2$ is an injection and thus $a_2(n)\leq y_2(n)$. Hence,  $a_3(n)=y_2(n)-a_2(n)\geq 0$ for all $n\geq 0$. 

Next assume $S>R/2$. Much of this case is similar to the case $S<R/2$. We highlight the differences below.

First note that if $R-S$ divides $R$ (which is equivalent to $R-S$ divides $S$), then in addition to the  pairs in $\mathcal Y_i(n)$ for $0\leq i\leq 2$, $n=0,R, 2R$, listed in the case $S<R/2$, we have 
\begin{align*}
((R-S)^{R/(R-S)}, \emptyset)&\in \mathcal Y_0(R),
\\
((R-S)^{S/(R-S)}, \delta_{R-S,1})&\in \mathcal Y_1(R),\\
((R-S)^{2R/(R-S)}, \emptyset), ((2R-S,(R-S)^{S/(R-S)}),\emptyset)&\in \mathcal Y_0(2R),\\ 
((R-S)^{(R+S)/(R-S)} \delta_{R-S,1}), ((S, (R-S)^{R/(R-S)}),\delta_{R-S,1})  &\in \mathcal Y_1(2R),
\\
((R-S)^{(2S-R)/(R-S)}, \delta_{R-S,2})&\in \mathcal Y_2(2R).
\end{align*}
  Thus, we still have $a_3(n)\geq 0$ for $n\in \{0, R, 2R\}$. 

Let $n\not \in \{0, R, 2R\}$. The set $\mathcal B_1(n)$ consists of the  pairs $(\mu, \delta_{R-S,1})\in \mathcal Y_1(n)$ with $\mu$ satisfying  $\ell(\mu)\geq 3$, $m_\mu(S)\in \{1,2\}$, $(m_\mu(S), m_\mu(R-S))\neq (1,1)$, and  $\mu_{\ell-2}-\mu_{\ell-1}< R$. 

Let $(\lambda, \delta_{R-S,1})\in \mathcal B_1(n)$.

\underline{Case (I)} $m_\lambda(R-S)\geq 2$. Proceed as in Case (i) above and obtain the same image (pairs $(\mu, \delta_{R-S, 2})\in \mathcal Y_2(n)$  with $m_\mu(S)\in \{0,1\}$).\smallskip

\underline{Case (II)} $m_\lambda(R-S)=0$ and $m_\lambda(S)=1$. Proceed as in Case (ii) above. We obtain pairs $(\mu, \delta_{R-S, 2})\in \mathcal Y_2(n)$ satisfying all of the following conditions (which are slightly different than in Case (ii) above).
\begin{itemize}\item $\ell(\mu)\geq 4$;\item $2\leq m_\mu(S)\leq 4$ and  $(m_\mu(S), m_\mu(R-S))\neq(2,1)$;   \item  if $m_\mu(S)=m_\mu(R-S)=2$, then $\mu_{\ell-4}-\mu_{\ell-3}\geq 2R-2S$,\\ else $\mu_{\ell-4}-\mu_{\ell-3}\geq R$; \item $\mu_{\ell-3}-\mu_{\ell-2}< R$.
\end{itemize}
\smallskip

\underline{Case (III)} $m_\lambda(R-S)=1$ and $m_\lambda(S)=2$. Proceed as in Case (iii)(a) above.  We obtain pairs $(\mu, \delta_{R-S, 2})\in \mathcal Y_2(n)$ satisfying all of the following conditions (which are slightly different than in Case (iii)(a) above).
\begin{itemize}\item $\ell(\mu)\geq 3$;\item $2\leq m_\mu(S)\leq 3$;  \item  if $m_\mu(R-S)=1$, then $\mu_{\ell-3}-\mu_{\ell-2}\geq 2R-2S$, \\ else $\mu_{\ell-3}-\mu_{\ell-2}\geq R$. 

\end{itemize}
\smallskip

\underline{Case (IV)} $m_\lambda(R-S)=0$ and $m_\lambda(S)=2$. Then, condition $\lambda_{\ell-2}-\lambda_{\ell-1}<R$ implies that $\lambda_{\ell-2}=2R-S$. First, let $\zeta$ be the partition obtained from $\lambda$ by removing one  part equal to $2R-S$.  We have $m_\zeta(S)=2$ and $\zeta_{\ell-2}\geq 2R-S$. In particular $m_\zeta(R-S)=0$. Now we proceed as in Case (iii)(b) with $\xi$ replaced by $\zeta$. There are again slight modifications for the conditions on the obtained pairs. They are as follows. If $\zeta_{\ell-2}\leq 2R+S$ and in the obtained pair $(\mu, \delta_{R-S, 2})$ we have $(m_\mu(S), m_\mu(R-S))\in\{(3,1), (3,2)\}$, then $\mu_{\ell-5}-\mu_{\ell-4}\geq 2R-2S$, else $\mu_{\ell-5}-\mu_{\ell-4}\geq R$.  If $\zeta_{\ell-2}> 2R+S$, then $\zeta_{\ell-2}\geq 4R-S$ and in the obtained pair $(\mu, \delta_{R-S, 2})$ we have $m_\mu(R-S)=2$.

The rest of the argument
is the same as in the case $S<R/2$. 

\bigskip

If $R$ is even and $S=R/2$  then $1/(q^S,q^{R-S};q^R)_\infty=1/(q^{R/2};q^R)^2_\infty$.
The proof follows in a similar manner as in the case $S<R/2$ above if  instead of  partitions $\lambda\in \mathcal P_{\pm S,R}$ we consider partitions $\lambda\in \mathcal P_{R/2,R}$ in two colors.

Alternatively, if $R$ is even and $S=R/2$, then \eqref{trunc-equiv} becomes
\begin{align*} \sum_{n=0}^\infty a_k(n)q^n:=\frac{(-1)^{k-1}}{(q^S;q^{2S})^2_\infty} 
\sum_{j=0}^{k-1} (-1)^jq^{j^2S}(1-q^{(2j+1)S}) -(-1)^{k-1}(q^{2S};q^{2S})_\infty. 
\end{align*} Replacing $q$ by $q^{1/S}$, we see that proving the non-negativity of the coefficients in the above $q$-series reduces to the case $R=2$, $S=1$ of Conjecture \ref{Conj 4.3}, i.e., 
the $q$-series $$\frac{(-1)^{k-1}}{(q;q^{2})_\infty} \left(\frac{1}{(q;q^{2})_\infty}
\sum_{j=0}^{k-1} (-1)^jq^{j^2}(1-q^{2j+1}) -(q;q)_\infty\right)$$ has non-negative coefficients. When $k=1, 2,3$, one can define  simpler injections to prove the stronger Conjecture \ref{Conj_2S} below. We leave this as an exercise for the reader.

This concludes the proof of Theorem \ref{main}. \end{proof}
\begin{conjecture}\label{Conj_2S}
     Let $k$ be a positive integer. Then, the series $$\frac{(-1)^{k-1}}{(q;q^{2})_\infty}
\sum_{j=0}^{k-1} (-1)^jq^{j^2}(1-q^{2j+1}) -(-1)^{k-1}(q;q)_\infty$$ has non-negative coefficients.
\end{conjecture}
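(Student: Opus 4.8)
The plan is to mirror the inductive strategy used to prove Theorem~\ref{main}, but with a family of partitions adapted to the single factor $1/(q;q^2)_\infty$. First I would rewrite the series in Conjecture~\ref{Conj_2S} as a truncated theta tail. Using $q^{j^2}(1-q^{2j+1})=q^{j^2}-q^{(j+1)^2}$ together with the classical evaluation $\sum_{j\geq 0}(-1)^jq^{j^2}(1-q^{2j+1})=\sum_{j=-\infty}^{\infty}(-1)^jq^{j^2}=(q;q^2)_\infty^2(q^2;q^2)_\infty$ and Euler's identity $1/(q;q^2)_\infty=(-q;q)_\infty$, one finds $\frac{1}{(q;q^2)_\infty}\sum_{j\geq 0}(-1)^jq^{j^2}(1-q^{2j+1})=(q;q)_\infty$. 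Writing $\sum_n\alpha_k(n)q^n$ for the series in Conjecture~\ref{Conj_2S}, this shows $\alpha_0(n)=\rho_1(n)$ (the coefficients of $(q;q)_\infty$ given by \eqref{eqn:rho} and \eqref{epnt} with $R=1$) and, exactly as in \eqref{sum of terms},
\begin{equation*}
\sum_{n=0}^{\infty}\big(\alpha_k(n)+\alpha_{k+1}(n)\big)q^n=\frac{q^{k^2}(1-q^{2k+1})}{(q;q^2)_\infty}.
\end{equation*}
Since $1-q^{2k+1}$ cancels the factor $(1-q^{2k+1})$ of $(q;q^2)_\infty$, the right-hand side is the generating function for $y_k(n):=|\mathcal Y_k(n)|$, where (redefining $\mathcal Y_k$ for the present setting) $\mathcal Y_k(n)$ is the set of pairs $(\lambda,\delta_{1,2,k})\vdash n$ with $\lambda\in\mathcal P_{\pm 1,2}$ and $m_\lambda(2k+1)=0$; equivalently, $\mathcal Y_k(n)$ is the set of partitions of $n$ into odd parts in which each of $1,3,\dots,2k-1$ occurs at least once and $2k+1$ does not occur. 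Note that this set replaces the bound $0\leq m_\lambda(S)\leq 2k$ of Theorem~\ref{main} by the single condition $m_\lambda(2k+1)=0$, reflecting the coincidence $S=R-S=1$.

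With this setup $\alpha_{k+1}(n)=y_k(n)-\alpha_k(n)$, so the proof reduces to an inductive injection argument identical in shape to the one for Theorem~\ref{main}. Setting $\mathcal B_0(n):=\mathcal Y_0(n)$, I would construct for each $k\geq 1$ an injection $\varphi_k:\mathcal B_{k-1}(n)\hookrightarrow\mathcal Y_k(n)$ and put $\mathcal B_k(n):=\mathcal Y_k(n)\setminus\varphi_k(\mathcal B_{k-1}(n))$. A telescoping computation then gives $\alpha_{k+1}(n)=|\mathcal B_k(n)|+(-1)^{k-1}\rho_1(n)$, so the conjecture follows once two things are established: (1) the injections $\varphi_k$ exist for all $k$; and (2) $\mathcal B_k(n)\neq\emptyset$ whenever $(-1)^{k-1}\rho_1(n)=-1$, that is, at the generalized pentagonal numbers $n=i(3i-1)/2$ whose parity of $i$ makes the $\rho_1$ term unfavorable. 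Part (2) is the easy part: one exhibits explicit elements of $\mathcal B_k(n)$ at each such $n$, exactly as is done at the ends of the cases $k=1,2,3$ in the proof of Theorem~\ref{main}.

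The hard part is (1), and specifically maintaining a uniform, tractable description of the image complement $\mathcal B_k(n)$ as $k$ grows, so that $\varphi_{k+1}$ can be defined on it. As the construction of $\varphi_2$ in the proof of Theorem~\ref{main} already shows, such an injection splits into subcases according to the multiplicities $m_\mu(1),m_\mu(3),\dots$ of the smallest odd parts and the gaps $\mu_{\ell-i}-\mu_{\ell-i+1}$ near the tail. The main obstacle is to organize these subcases into a single family indexed by $k$ and to prove, by induction on $k$, an invariant that characterizes $\mathcal B_k$ in terms of the multiplicities of $1,3,\dots,2k+1$ and a bounded number of tail gaps. The coincidence $S=R-S=1$ should help here: the staircase $\delta_{1,2,k}$ consists of the consecutive odd numbers $1,3,\dots,2k-1$, and the natural size-preserving moves (transferring a unit from a part $2k+1$ into the staircase, or splitting a large odd part as $(\,\cdot\,-2)+1+1$) interact compatibly with increasing $k$, which suggests the invariant should have a clean recursive form. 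Verifying that these moves really do produce an injection with the predicted complement, uniformly in $k$, is the crux.

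As an alternative route, one can attempt an analytic proof via the reformulation $\sum_n\alpha_k(n)q^n=(-q;q)_\infty\sum_{j\geq k}(-1)^{j-k}(q^{j^2}-q^{(j+1)^2})$. Writing $d(n,m):=Q(n-m^2)-Q(n-(m+1)^2)$, the coefficient of $q^n$ equals $\sum_{m\geq k}(-1)^{m-k}d(n,m)$, which is non-negative by the Leibniz alternating-series criterion provided $d(n,m)\geq 0$ and $d(n,m)$ is non-increasing in $m$ for $m\geq k$. The first condition is the (known) monotonicity $Q(n)\geq Q(n-1)$; the second is the convexity-type inequality $Q(n-m^2)+Q(n-(m+2)^2)\geq 2\,Q(n-(m+1)^2)$ at square shifts. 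For large $n$ this should be accessible by the asymptotic method used to prove Theorem~\ref{thm:Q} for the analogous $R=4,S=1$ truncation, now applied to $Q$ with square shifts; the obstacle in this route is to establish the convexity for the remaining small values of $n$ uniformly in $k$, where the inequality holds with very thin margins and cannot be read off from asymptotics alone.
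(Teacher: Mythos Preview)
This statement is Conjecture~\ref{Conj_2S} in the paper, and the paper does \emph{not} prove it; it is stated as an open conjecture. The only remark the paper makes is that for $k\in\{1,2,3\}$ one can define ``simpler injections'' than those in the proof of Theorem~\ref{main} to handle these three cases, and even that is left as an exercise for the reader. So there is no proof in the paper to compare your attempt against for general $k$.

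Your reformulation is sound: the base value $\alpha_0(n)=\rho_1(n)$, the recursion $\alpha_k(n)+\alpha_{k+1}(n)=y_k(n)$ with $y_k(n)$ counting partitions of $n$ into odd parts in which each of $1,3,\dots,2k-1$ occurs and $2k+1$ does not, and the telescoped identity $\alpha_{k+1}(n)=|\mathcal B_k(n)|+(-1)^{k-1}\rho_1(n)$ are all correct and mirror exactly the structure of the paper's proof of Theorem~\ref{main}. But what you have written is, as you say, a \emph{plan}, not a proof. You yourself flag step~(1)---constructing the injections $\varphi_k$ for all $k$ with a uniform, tractable description of the complements $\mathcal B_k$---as ``the crux,'' and you do not carry it out. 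That step is precisely what would resolve the conjecture, and it is not known how to do it; the case analysis in the paper already becomes intricate at $k=3$, and no recursive invariant of the type you describe has been found.

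Your alternative analytic route via the Leibniz criterion likewise reduces the conjecture to the unproved inequality $Q(n-m^2)+Q(n-(m+2)^2)\geq 2Q(n-(m+1)^2)$ for $m\geq k$. This inequality is delicate (it fails at $m=0$, e.g.\ $n=4$ gives $Q(4)+Q(0)=3<4=2Q(3)$), so even the restriction to $m\geq 1$ would itself require a genuine argument, not just asymptotics. In short: your framework is correct and matches how the paper handles the small-$k$ cases, but neither you nor the paper supplies a proof of Conjecture~\ref{Conj_2S} for general $k$.
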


Next, we derive some applications of Theorem \ref{main}. We note that, when  $(R,S)=(3,1)$,  $$\frac{1}{(q^S,q^{R-S};q^R)_\infty}=\frac{1}{(q,q^2;q^3)_\infty} =\sum_{n=0}^\infty b_3(n)q^n$$  and when $(R,S)=(4,1)$,$$\frac{1}{(q^S,q^{R-S};q^R)_\infty} =\frac{1}{(q,q^2;q^4)_\infty} =\sum_{n=0}^\infty Q(n)q^n.$$  
  Then, Theorem \ref{main}  leads to   the following linear inequalities for $b_3(n)$   and also for $Q(n)$.

\begin{corollary} \label{Cor-bQ} For all $n\geq 0$, 
\begin{align*} b_3(n)-b_3(n-1)-\rho_3(n)& \geq 0\\ b_3(n)-b_3(n-1)-b_3(n-2)+b_3(n-5)-\rho_3(n)& \leq 0\\ b_3(n)-b_3(n-1)-b_3(n-2)+b_3(n-5)+ b_3(n-7)-b_3(n-12)-\rho_3(n)& \geq 0\\ Q(n)-Q(n-1)-\rho_4(n)& \geq 0\\ Q(n)-Q(n-1)-Q(n-3)+Q(n-6)-\rho_4(n)& \leq 0\\ Q(n)-Q(n-1)-Q(n-3)+Q(n-6)+ Q(n-10)-Q(n-15)-\rho_4(n)& \geq 0.
\end{align*}
\end{corollary}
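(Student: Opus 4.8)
The plan is to obtain all six inequalities as direct specializations of Theorem~\ref{main}, using that, as recorded just after \eqref{trunc-equiv}, Theorem~\ref{main} is exactly the assertion that $a_{S,R,k}(n)\geq 0$ for $k\in\{1,2,3\}$. So I only need to write the generating function \eqref{trunc-equiv} out for $(R,S)=(3,1)$ and for $(R,S)=(4,1)$ and then extract the coefficient of $q^n$ for $k=1,2,3$.

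For $(R,S)=(3,1)$ I can quote \eqref{conj 4.3-b3} directly, together with the identities $\frac{1}{(q,q^2;q^3)_\infty}=\sum_{n\geq 0}b_3(n)q^n$ and $(q^3;q^3)_\infty=\sum_{n\geq 0}\rho_3(n)q^n$. This gives
$$a_{1,3,k}(n)=(-1)^{k-1}\!\!\sum_{j=-(k-1)}^{k}\!\!(-1)^j\,b_3\!\Bigl(n-\tfrac{j(3j-1)}{2}\Bigr)-(-1)^{k-1}\rho_3(n).$$
For $k=1$ the relevant values of $\tfrac{j(3j-1)}{2}$ are $0,1$; for $k=2$ they are $2,0,1,5$; for $k=3$ they are $7,2,0,1,5,12$. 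Substituting these, and noting that the global sign $(-1)^{k-1}$ equals $-1$ when $k=2$ — so that $a_{1,3,2}(n)\geq 0$ reads as an inequality pointing the other way — produces exactly the three displayed inequalities for $b_3$.

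For $(R,S)=(4,1)$ I use $\frac{1}{(q,q^3;q^4)_\infty}=\sum_{n\geq 0}Q(n)q^n$ and $(q^4;q^4)_\infty=\sum_{n\geq 0}\rho_4(n)q^n$. In \eqref{trunc-equiv} the exponent $\tfrac{j(j+1)R}{2}-jS$ equals $2j^2+j$, and the companion exponent produced by the factor $1-q^{(2j+1)S}$ is $2j^2+3j+1=2(j+1)^2-(j+1)$. Re-indexing the second half of the sum by $m=-(j+1)$ then shows
$$\sum_{j=0}^{k-1}(-1)^j q^{2j^2+j}\bigl(1-q^{2j+1}\bigr)=\sum_{m=-k}^{k-1}(-1)^m q^{m(2m+1)},$$
a partial sum of a Jacobi triple product theta function. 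For $k=1,2,3$ the exponents $m(2m+1)$ that occur are, respectively, $\{0,1\}$, $\{0,1,3,6\}$, and $\{0,1,3,6,10,15\}$, carrying signs $(-1)^m$. Multiplying by $\tfrac{(-1)^{k-1}}{(q,q^3;q^4)_\infty}$, subtracting $(-1)^{k-1}(q^4;q^4)_\infty$, and reading off the coefficient of $q^n$ yields the last three inequalities, the $k=2$ case again flipping direction because of the prefactor $(-1)^{k-1}$.

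The argument has no serious obstacle: it is a coefficient-matching specialization of Theorem~\ref{main}. The one step deserving a line of justification is the telescoping in the $(R,S)=(4,1)$ case — the identity $2j^2+3j+1=2(j+1)^2-(j+1)$ and the re-indexing $m=-(j+1)$ that fuses the two halves of the truncated series into a single partial theta sum — together with careful tracking of the sign $(-1)^{k-1}$, which reverses the sense of the inequality for even $k$.
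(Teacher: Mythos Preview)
Your proposal is correct and follows exactly the approach the paper intends: the corollary is stated immediately after the observation that $\frac{1}{(q,q^2;q^3)_\infty}$ and $\frac{1}{(q,q^3;q^4)_\infty}$ generate $b_3(n)$ and $Q(n)$, and is meant to be read off from Theorem~\ref{main} by specializing \eqref{trunc-equiv} to $(R,S)=(3,1)$ and $(4,1)$ for $k=1,2,3$. Your explicit re-indexing $m=-(j+1)$ in the $(4,1)$ case and your tracking of the sign $(-1)^{k-1}$ simply make visible the coefficient extraction that the paper leaves implicit.
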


We continue with another application of Theorem \ref{main} when $(R,S)=(4,1)$. 

Given a partition $\lambda$, the minimal excludant of $\lambda$, denoted by $\mex(\lambda)$, is the smallest positive integer that does not occur as a part of $\lambda$. For example, $$\mex(7,7,5,3,2,2,1)=4.$$ 

In \cite{AN},  Andrews and Newman define 
 $\mex_{A,a}(\pi)$
to be the smallest integer congruent to $a$ modulo $A$ that is not a part of the partition $\pi$. They also define $p_{A,a}(n)$ to be the number of partitions of $\pi$ of $n$ with $$\mex_{A,a}(\pi)\equiv a\pmod{2A}$$ and $\overline p_{A,a}(n)$ to be the number of partitions  $\pi$ of $n$ with $$\mex_{A,a}(\pi)\equiv A+a\pmod{2A}.$$

As stated in the introduction, when $R=4, S=1$, Conjecture \ref{Conj 4.3}  is equivalent to the non-negativity of the coefficients of  the $q$-series $$\frac{1}{(q;q^2)_\infty}\sum_{n=0}^\infty(-1)^{T_n}q^{T_{n+2k}}.$$ From case $k=1$ of Theorem \ref{main}, we have that for all $n\geq 0$,  \begin{align}\label{Q-mex1} \sum_{j\geq 0}\big(Q(n-T_{4j+2})-Q(n-T_{4j+4})\big)+ \sum_{j\geq 0}\big(Q(n-T_{4j+1}) -& Q(n-T_{4j+3})\big)\\ & \notag \geq Q(n-1).\end{align}

Since $T_{2j}=|\delta_{3,4,j}|$  the transformation $\lambda \mapsto \lambda \cup \delta_{3,4,j}$  shows that \begin{align*}
    Q(n-T_{2j})& =|\{\lambda\vdash n \mid \lambda \text{ has odd parts, } \mex_{4,3}(\lambda)\geq 4j+3\}|
\end{align*}
Similarly, the transformation $\lambda \mapsto \lambda \cup \delta_{1,4,j+1}$ shows that \begin{align*}
    Q(n-T_{2j+1})& =|\{\lambda\vdash n \mid \lambda \text{ has odd parts, } \mex_{4,1}(\lambda)\geq 4j+5\}|.
\end{align*} Hence, $Q(n-T_{4j+2})-Q(n-T_{4j+4})$ is the number of  partitions $\pi$ of $n$ into odd parts with $\mex_{4,3}(\pi)=8j+7$  and $Q(n-T_{4j+1})-Q(n-T_{4j+3})$  is the number of  partitions $\pi$ of $n$ into odd parts with $\mex_{4,1}(\pi)=8j+5$.   

In analogy to \cite{AN}, we denote by  $\overline Q_{A,a}(n)$  the number of partitions  $\pi$ of $n$ into odd parts with $$\mex_{A,a}(\pi)\equiv A+a\pmod{2A}.$$ Then, the discussion above leads to the following result. 

\begin{proposition} \label{prop-mex} For $n\geq 0$, we have $$\overline Q_{4,3}(n)+ \overline Q_{4,1}(n)\geq Q(n-1).
$$
\end{proposition}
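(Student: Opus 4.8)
The plan is to recognize that Proposition~\ref{prop-mex} is an immediate consequence of the $k=1$ case of Theorem~\ref{main} specialized to $(R,S)=(4,1)$, once the combinatorial dictionary set up in the paragraphs preceding the statement is in place. Concretely, I would start from inequality~\eqref{Q-mex1}, namely
\begin{align*}
\sum_{j\geq 0}\big(Q(n-T_{4j+2})-Q(n-T_{4j+4})\big)+\sum_{j\geq 0}\big(Q(n-T_{4j+1})-Q(n-T_{4j+3})\big)\geq Q(n-1),
\end{align*}
which holds for all $n\geq 0$ by Theorem~\ref{main} with $k=1$ and the identity $1/(q,q^2;q^4)_\infty=\sum_{n\geq 0}Q(n)q^n$.

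The core of the argument is then purely bookkeeping. First I would record the two ``$\delta$-shift'' identities already derived in the excerpt: using $T_{2j}=|\delta_{3,4,j}|$, the map $\lambda\mapsto\lambda\cup\delta_{3,4,j}$ gives
$$Q(n-T_{2j})=\#\{\lambda\vdash n:\ \lambda\text{ has only odd parts},\ \mex_{4,3}(\lambda)\geq 4j+3\},$$
and using $T_{2j+1}=|\delta_{1,4,j+1}|$, the map $\lambda\mapsto\lambda\cup\delta_{1,4,j+1}$ gives
$$Q(n-T_{2j+1})=\#\{\lambda\vdash n:\ \lambda\text{ has only odd parts},\ \mex_{4,1}(\lambda)\geq 4j+5\}.$$
Next I would telescope: $Q(n-T_{4j+2})-Q(n-T_{4j+4})$ counts partitions $\pi$ of $n$ into odd parts with $\mex_{4,3}(\pi)\geq 8j+7$ minus those with $\mex_{4,3}(\pi)\geq 8j+11$, i.e. exactly those with $\mex_{4,3}(\pi)\in\{8j+7\}$ among residues $\equiv 3\pmod 4$ — that is, $\mex_{4,3}(\pi)=8j+7$, which is the condition $\mex_{4,3}(\pi)\equiv 3+4\equiv 7\pmod 8$ realized at this particular value. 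Summing over $j\geq 0$ gives precisely $\overline Q_{4,3}(n)$, the number of partitions of $n$ into odd parts with $\mex_{4,3}(\pi)\equiv 4+3\pmod{8}$. Similarly $\sum_{j\geq 0}\big(Q(n-T_{4j+1})-Q(n-T_{4j+3})\big)=\overline Q_{4,1}(n)$. Substituting these two evaluations into \eqref{Q-mex1} yields $\overline Q_{4,3}(n)+\overline Q_{4,1}(n)\geq Q(n-1)$, as claimed.

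I do not anticipate a genuine obstacle here, since all the hard analytic/combinatorial content lives in Theorem~\ref{main}; the only point requiring care is the telescoping step, where one must check that the alternating sums collapse correctly and that the ``$\geq 8j+7$'' versus ``$=8j+7$'' distinction is handled properly — in particular that every sufficiently large even $\mex_{4,3}$-value is reached and that there is no double counting between consecutive terms. A clean way to present this is to note that the sets $\{\mex_{4,3}(\pi)\geq 8j+7\}$ are nested decreasing in $j$, so the differences partition the set of partitions with $\mex_{4,3}$ lying in the union $\bigcup_j[8j+7,8j+11)\cap(3+4\mathbb Z)=\{7,15,23,\dots\}$, which is exactly $\{m\equiv 7\pmod 8\}$; the analogous remark for the residue-$1$ family completes the proof. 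I would also remark, as the paper does in the surrounding text, that this recovers and refines the $\mex$-type interpretation of Andrews and Newman in the restricted setting of partitions into odd parts.
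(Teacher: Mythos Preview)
Your proposal is correct and follows essentially the same route as the paper: the proposition is stated after the supporting discussion, so the paper's ``proof'' is precisely the derivation you outline --- invoke the $k=1$ case of Theorem~\ref{main} at $(R,S)=(4,1)$ to obtain \eqref{Q-mex1}, use the $\delta$-shift bijections to interpret $Q(n-T_{2j})$ and $Q(n-T_{2j+1})$ as $\mex_{4,3}$- and $\mex_{4,1}$-threshold counts, then telescope and sum over $j$ to identify the two series with $\overline Q_{4,3}(n)$ and $\overline Q_{4,1}(n)$. Your handling of the telescoping (noting that the only value of $\mex_{4,3}\equiv 3\pmod 4$ in $[8j+7,8j+11)$ is $8j+7$, and similarly for the other family) is exactly the check the paper leaves implicit.
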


\section{Proof of Theorems  \ref{thm:b3},  \ref{thm:b6}, and \ref{thm:Q}}\label{weak-conj}

\subsection{Proof of Theorem \ref{thm:b3}} We prove that for fixed $k\geq 1$ and all $n\geq 0$, 
$$(-1)^{k}\sum_{j=-k}^k(-1)^j b_3(n-j(3j-1)/2) +(-1)^{k-1} \rho_3(n)\geq 0.$$

From \cite[Corollary 17]{M4}, we have 
\begin{align}\label{Shanks}
    \frac{(-1)^k}{(q;q)_\infty}& \sum_{j=-k}^k(-1)^jq^{j(3j-1)/2}  -(-1)^k\\ \notag = &  \frac{q^{k(3k+7)/2+2}}{(q;q^3)_\infty(q^3;q^3)_\infty}\sum_{n=0}^\infty \frac{q^{n(3n+3k+5)}}{(q^3;q^3)_n(q^2;q^3)_{n+k+1}}\\\notag  & + \frac{q^{k(3k+5)/2+1}}{(q^2;q^3)_\infty(q^3;q^3)_\infty}\sum_{n=0}^\infty \frac{q^{n(3n+3k+4)}}{(q^3;q^3)_n(q;q^3)_{n+k+1}}.
\end{align}
Multiplying both sides of \eqref{Shanks} by $(q^3;q^3)_\infty$ we obtain 
\begin{align}\label{b3:qseries}
   (-1)^k \frac{(q^3;q^3)_\infty}{(q;q)_\infty}& \sum_{j=-k}^k(-1)^jq^{j(3j-1)/2}  +(-1)^{k-1}(q^3;q^3)_\infty\\ = & \notag \frac{q^{k(3k+5)/2+1}}{(q^2;q^3)_\infty}\sum_{n=0}^\infty \frac{q^{n(3n+3k+4)}}{(q^3;q^3)_n(q;q^3)_{n+k+1}}
   \\ &\notag + \frac{q^{k(3k+7)/2+2}}{(q;q^3)_\infty}\sum_{n=0}^\infty \frac{q^{n(3n+3k+5)}}{(q^3;q^3)_n(q^2;q^3)_{n+k+1}}\\ & =: H_1(q)+ H_2(q).
\end{align}

Since each of $H_1(q)$ and $H_2(q)$ above   has non-negative coefficients, this completes the proof Theorem \ref{thm:b3}. \qed \medskip

We now interpret \eqref{b3:qseries} as a partition generating function.  First we introduce some notation. Given a $3$-regular partition $\mu$, we define the $(k+2,3)-$Durfee rectangle of $\mu$ to be  the largest $a\times (a+k+2)$ rectangle that fits inside the $3$-modular diagram of $\mu$. Note that $\sum_{t=0}^k (3t+2)=k(3k+7)/2+2$ and $\sum_{t=0}^k (3t+1)=k(3k+5)/2+1$.

Let $\mathcal B_3^i(m)$ be 
the set of $3$-regular partitions $\lambda$ of $m$ such that if we write $\lambda=\lambda^{1,3}\cup\lambda^{2,3}$,  parts $3t+i$,  $0\leq t\leq k$ occur in $\lambda^{i,3}$  at least once and the   first row in the $3$-regular diagram of $\lambda^{i,3}$  below the $(k+2,3)-$Durfee rectangle is shorter than the width of the $(k+2,3)-$Durfee rectangle. (If the $(k+2,3)-$Durfee rectangle of $\lambda^{i,3}$  has height zero, this means that the parts of $\lambda^{i,3}$  are at most  $3k+i$ and all parts $3t+i$,  $0\leq t\leq k$ occur.) Then $H_i(q)$,   $i=1,2$,  is the generating function for the sequence $|\mathcal B_3^i(n)|$. To see the interpretation of $H_1(q)$  as a partition generating function,  write $$H_1(q)=\frac{1}{(q^2;q^3)_\infty}\sum_{n=0}^\infty \frac{q^{k(3k+5)/2+1}}{(q;q^3)_{n+k+1}} \cdot \frac{q^{n(3n+3k+4)}}{(q^3;q^3)_n}=: \frac{1}{(q^2;q^3)_\infty}\sum_{n=0}^\infty h_1(q;n)\cdot h_2(q;n).$$ Informally, for partitions $\lambda \in \mathcal B_3^1(n)$,
\begin{itemize} \item the term $1/{(q^2;q^3)_\infty}$ generates the parts of $\lambda^{2,3}$; \item  $h_2(q;n)$ generates the first $n$ parts of $\lambda^{1,3}$, all of size at least $3n+3k+4$; 
\item $h_1(q;n)$ generates the remaining  parts of $\lambda^{1,3}$, all of size at most $3n+3k+1$ and parts $3t+1$, $0\leq t\leq k$, occur at least once.  
\end{itemize}
Thus, term $h_1(q;n)\cdot h_2(q;n)$  generates partitions $\lambda^{1,3}$ with $(k+2, 3)-$Durfee rectangle of height $n$. 
The interpretation of $H_2(q)$ is explained similarly.

Then, the coefficient of $q^n$ is $|\mathcal B_3^1(n)|+|\mathcal B_3^2(n)|$.

We note that $\mathcal B_3^1(n)\cap \mathcal B_3^2(n)\neq \emptyset$.

\subsection{Proof of Theorem \ref{thm:b6}} The generating function for the sequence given by the left hand side of inequality \eqref{eq:b6-conj} is given by $$H(q)=(-1)^{k-1}(q^6;q^6)_\infty+ (-1)^k\frac{(q^6;q^6)_\infty}{(q;q)_\infty}\sum_{j=-k}^k (-1)^jq^{j(3j-1)/2}. $$
Multiplying both sides of \eqref{Shanks} by $(q^6;q^6)_\infty$, we obtain 
\begin{align*}
   (-1)^{k-1}(q^6;q^6)_\infty +& (-1)^k \frac{(q^6;q^6)_\infty}{(q;q)_\infty} \sum_{j=-k}^k(-1)^jq^{j(3j-1)/2}  \\ = &  \frac{q^{k(3k+7)/2+2}(-q^3;q^3)_\infty}{(q;q^3)_\infty}\sum_{n=0}^\infty \frac{q^{n(3n+3k+5)}}{(q^3;q^3)_n(q^2;q^3)_{n+k+1}}\\ & + \frac{q^{k(3k+5)/2+1}(-q^3;q^3)_\infty}{(q^2;q^3)_\infty}\sum_{n=0}^\infty \frac{q^{n(3n+3k+4)}}{(q^3;q^3)_n(q;q^3)_{n+k+1}}, 
\end{align*} which has non-negative coefficients.  

Moreover, the coefficient of $q^n$ in the series above equals $|\mathcal B_3^{2*}(n)|+|\mathcal B_3^{1*}(n)|$, where,  for $i=1,2$, $\mathcal B_3^{i*}(n)$ is 
the number of 
partitions $\lambda$ of $n$ such that $\lambda^{0,3}$ has distinct parts and $\lambda^{1,3}\cup \lambda^{2,3}\in \mathcal B_3^i$.

\subsection{Proof of Theorem \ref{thm:Q}}

It follows from \eqref{ck} that proving the non-negativity of $c_k(n)$  is equivalent to proving that, for all $n\geq 0$,
if $k$ is odd, $$Q(n-T_{2k-2})+\sum_{i=0}^{\frac{k-3}{2}}\big(Q(n-T_{4i})+Q(n-T_{4i+3})\big)\geq \sum_{i=0}^{\frac{k-3}{2}}\big(Q(n-T_{4i+1})+Q(n-T_{4i+2})\big)$$ and if $k$ is even 
$$Q(n-T_{2k-1})+\sum_{i=0}^{\frac{k-2}{2}}\big(Q(n-T_{4i+1})+Q(n-T_{4i+2})\big)\geq \sum_{i=0}^{\frac{k-2}{2}}\big(Q(n-T_{4i})+Q(n-T_{4i+3})\big).$$
For each fixed $k$,  \cite[Theorem 1.11]{BBCFW} shows that the inequality is eventually true and provides an explicit bound $N_k$  such that the inequality is true for $n\geq N_k$.

  \section{Combinatorial proof of an identity related to the Jacobi Triple Product}\label{sec-JTP}

  In this section we give both  analytic and  combinatorial proofs of an identity which includes as particular cases 
   the conjectural identities (6.7), (6.10), (6.12), (6.13) in  \cite{M1}.

First, we recall the Jacobi Triple Product identity in its general form. See, for example, \cite{KK} where a combinatorial proof is provided. 
    For integers $m$ and $s$ such that $0<s<m$, and complex numbers $q$ and $z$ with $|q|<1$ and $z\neq 0$, we have 
  \begin{equation}\label{JTP} (zq^s,z^{-1}q^{m-s}, q^m;q^m)_\infty=\sum_{i=-\infty}^\infty(-z)^iq^{m(i^2-i)/2+si}.
      \end{equation}

  \begin{theorem}\label{T-gen}Let $m, s$ be positive integers such that $m\geq 2$ and $0<s<m$. Then, \begin{equation}\label{I6.7-gen}\frac{(q^s,q^{2m-s},q^{2m};q^{2m})_\infty(q^{2m-2s},q^{2m+2s} ;q^{4m})_\infty}{(q^s, q^{m-s};q^{m})_\infty}=\sum_{n=-\infty}^\infty q^{n(mn+s)}.\end{equation}
  \end{theorem}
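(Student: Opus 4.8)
The plan is to derive \eqref{I6.7-gen} directly from the Jacobi Triple Product identity \eqref{JTP} by two applications: one on the left-hand side numerator and one to re-sum the right-hand side. First I would observe that the target sum $\sum_{n=-\infty}^\infty q^{n(mn+s)}$ is itself an instance of \eqref{JTP}: taking modulus $2m$, shift $s$ (note $0<s<2m$ since $0<s<m$), and specialization $z=-q^{m}$ (so that $(-z)^n = (-1)^n(-1)^n q^{mn}=q^{mn}$ and the exponent $2m(n^2-n)/2 + sn + mn = mn^2 + sn = n(mn+s)$), we get
\begin{equation}\label{pp-rhs}
\sum_{n=-\infty}^\infty q^{n(mn+s)} = (-q^{m+s}, -q^{m-s}, q^{2m};q^{2m})_\infty.
\end{equation}
So it suffices to prove the product identity
\begin{equation}\label{pp-prod}
\frac{(q^s,q^{2m-s},q^{2m};q^{2m})_\infty(q^{2m-2s},q^{2m+2s};q^{4m})_\infty}{(q^s,q^{m-s};q^m)_\infty} = (-q^{m+s},-q^{m-s},q^{2m};q^{2m})_\infty.
\end{equation}

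Next I would reduce \eqref{pp-prod} to a finite set of elementary Pochhammer manipulations. Cancel the common factor $(q^{2m};q^{2m})_\infty$ from both sides. The denominator splits as $(q^s;q^m)_\infty (q^{m-s};q^m)_\infty$; using the standard dissection $(x;q^m)_\infty = (x;q^{2m})_\infty (xq^m;q^{2m})_\infty$ we write $(q^s;q^m)_\infty = (q^s;q^{2m})_\infty(q^{m+s};q^{2m})_\infty$ and $(q^{m-s};q^m)_\infty = (q^{m-s};q^{2m})_\infty(q^{2m-s};q^{2m})_\infty$. After cancelling $(q^s;q^{2m})_\infty$ and $(q^{2m-s};q^{2m})_\infty$ against the numerator, \eqref{pp-prod} becomes
\begin{equation}\label{pp-reduced}
(q^{2m-2s},q^{2m+2s};q^{4m})_\infty = (q^{m+s};q^{2m})_\infty (q^{m-s};q^{2m})_\infty (-q^{m+s};q^{2m})_\infty (-q^{m-s};q^{2m})_\infty.
\end{equation}
Finally, each pair on the right collapses by the difference-of-squares identity $(x;Q)_\infty(-x;Q)_\infty = (x^2;Q^2)_\infty$: with $x=q^{m+s}$, $Q=q^{2m}$ we get $(q^{2m+2s};q^{4m})_\infty$, and with $x=q^{m-s}$ we get $(q^{2m-2s};q^{4m})_\infty$. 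This is exactly the left side of \eqref{pp-reduced}, completing the proof.

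I do not anticipate a genuine obstacle here — the whole argument is a bookkeeping exercise in Pochhammer dissections once the right specialization $z=-q^m$, $m\mapsto 2m$ in \eqref{JTP} is spotted. The one point requiring a little care is tracking exponents and the sign factor $(-z)^n$ in the specialization \eqref{pp-rhs}, and making sure the hypotheses $0<s<2m$ and $m\geq 2$ legitimately allow the use of \eqref{JTP}; these are immediate from $m\geq 2$ and $0<s<m$. An alternative, if a fully combinatorial argument is preferred to match the section title, would be to interpret $\sum_n q^{n(mn+s)}$ as an overpartition-type generating function and build a sign-reversing involution realizing \eqref{pp-prod} directly; but the analytic route above is short and self-contained, so I would present that and, if desired, remark on the combinatorial reading afterward.
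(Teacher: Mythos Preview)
Your proof is correct and is essentially the paper's own analytic proof: both reduce \eqref{I6.7-gen} to the single JTP instance \eqref{pp-rhs} via the same two Pochhammer manipulations (the dissection $(x;q^m)_\infty=(x;q^{2m})_\infty(xq^m;q^{2m})_\infty$ and the difference-of-squares $(x;Q)_\infty(-x;Q)_\infty=(x^2;Q^2)_\infty$), only in a slightly different order. The paper also supplies a separate combinatorial proof via sign-reversing involutions, which you correctly anticipate as an alternative.
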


  \begin{corollary}If $m=5$, then for $s=1,2,3,4$, we obtain the respective identities (6.7), (6.10), (6.12), (6.13) in  \cite{M1}.   \end{corollary}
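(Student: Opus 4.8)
The plan is to derive the four identities by directly specializing Theorem~\ref{T-gen} at $m=5$ and then matching each case to the corresponding formula in \cite{M1}. Because \eqref{I6.7-gen} is already proved, no further analytic or combinatorial argument is needed: the entire task is substitution followed by notational reconciliation.

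First I would set $m=5$ in \eqref{I6.7-gen}, so that $2m=10$, $4m=20$, and $m-s=5-s$, obtaining
$$\frac{(q^s,q^{10-s},q^{10};q^{10})_\infty(q^{10-2s},q^{10+2s};q^{20})_\infty}{(q^s,q^{5-s};q^5)_\infty}=\sum_{n=-\infty}^\infty q^{n(5n+s)}.$$
Then I would record the four instances. As $s$ runs through $1,2,3,4$, the factors $(q^s,q^{10-s},q^{10};q^{10})_\infty$ use the exponent pairs $(1,9),(2,8),(3,7),(4,6)$; the base-$q^{20}$ factors $(q^{10-2s},q^{10+2s};q^{20})_\infty$ use $(8,12),(6,14),(4,16),(2,18)$; and the denominator $(q^s,q^{5-s};q^5)_\infty$ uses $(1,4),(2,3),(3,2),(4,1)$. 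I would display each of these four product-equals-series identities and identify it with (6.7), (6.10), (6.12), (6.13) of \cite{M1} respectively.

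The only step requiring attention is this final identification, where two points must be handled carefully. First, the denominator is symmetric under $s\leftrightarrow 5-s$, so $s=1$ and $s=4$ share the denominator $(q,q^4;q^5)_\infty$ while $s=2$ and $s=3$ share $(q^2,q^3;q^5)_\infty$; since the numerators and right-hand sides nonetheless differ, one must take care to pair each left-hand side with the correct theta series rather than a reflected one. Second, if Merca records these identities with a product rather than a bilateral series on the right, I would reconcile the two forms by applying the general Jacobi triple product \eqref{JTP} with $z=-1$, base $q^{10}$, and parameter $s+5$ (legitimate since $0<s+5<10$ for $s\in\{1,2,3,4\}$), which rewrites $\sum_{n=-\infty}^\infty q^{n(5n+s)}$ as $(-q^{5+s},-q^{5-s},q^{10};q^{10})_\infty$. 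This conversion, rather than any genuine difficulty, is the main obstacle; once the notation is aligned the corollary is immediate.
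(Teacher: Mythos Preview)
Your proposal is correct and matches the paper's treatment: the paper does not supply a proof for this corollary at all, regarding it as an immediate specialization of Theorem~\ref{T-gen} at $m=5$. Your plan of substituting and reconciling notation (including the possible use of \eqref{JTP} to convert the bilateral series into a product) is exactly what is implicitly required, and no further argument is needed.
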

  \begin{remark} In \cite{KMR}, the authors 
  point out  that identities (6.7), (6.10), (6.12), (6.13) in  \cite{M1} follow from the Jacobi Triple Product identity. 
  \end{remark}

  \begin{proof}[Analytic proof]  Since   \begin{align*}(q^s;q^{m})_\infty& =(q^s;q^{2m})_\infty(q^{m+s};q^{2m})_\infty\\ (q^{2m+2s};q^{4m})_\infty& =  (q^{m+s};q^{2m})_\infty(-q^{m+s};q^{2m})_\infty,\end{align*} to prove identity \eqref{I6.7-gen}, we need to show that \begin{align}\label{eq_staircase}(-q^{m+s};q^{2m})_\infty(-q^{m-s};q^{2m})_\infty(q^{2m};q^{2m})_\infty=\sum_{n=-\infty}^\infty q^{n(mn+s)},
  \end{align} which is precisely \eqref{JTP} with $q$ replaced by $q^2$ and $z$ replaced by $-q^{m-s}$.
      \end{proof}
 \begin{proof}[Combinatorial proof] 
 To simplify the exposition, we first   define a sign reversing involution to prove combinatorially the trivial cancellation $$\frac{(q;q)_\infty}{(q;q)_\infty}=1.$$ We say that an involution defined on a set of pairs (or tuples) of partitions is sign reversing, if the involution changes  the parity of the length of the first partition in each pair (or tuple). Let $\mathcal{QP}(n)$ be the set of pairs of partitions $(\lambda, \mu)\vdash n$ with $\lambda$ a distinct partition and $\mu$ an unrestricted partition. The map $F:\mathcal{QP}(n)\setminus \{(\emptyset, \emptyset)\} \to \mathcal{QP}(n)\setminus \{(\emptyset, \emptyset)\}$ defined by
 $$F(\lambda, \mu)=\begin{cases}(\lambda\cup(\mu_1), \mu\setminus(\mu_1)) & \text{ if } \lambda_1<\mu_1 \\ (\lambda\setminus(\lambda_1), \mu\cup(\lambda_1)) & \text{ if } \lambda_1 \geq \mu_1 \end{cases} $$ is a sign reversing involution. Recall that, following our  convention,    if $\nu=\emptyset$, then $\nu_1=0$. Since $(q;q)_\infty/(q;q)_\infty$ is the generating function for $$qp_{e-o}(n):=|\{(\lambda, \mu)\in \mathcal{QP}(n), \ell(\lambda) \textrm{ even}\}|-|\{(\lambda, \mu)\in \mathcal{QP}(n), \ell(\lambda) \textrm{ odd}\}|,$$ this completes the argument.

  We continue with the combinatorial proof of    Theorem \ref{T-gen}.   

    If $m=2$, then \eqref{I6.7-gen} becomes \begin{equation}\label{I6.7-2}\frac{(q,q^{3},q^{4};q^{4})_\infty(q^{2},q^{6} ;q^{8})_\infty}{(q;q^{2})^2_\infty}=\sum_{n=-\infty}^\infty q^{n(2n+1)}.\end{equation} Using the transformation $F$ above on pairs of partitions with odd parts, we have that \eqref{I6.7-2} is equivalent to \begin{equation}\label{I6.7-2'}\frac{(q^{2};q^{2})_\infty}{(q;q^{2})_\infty}=\sum_{n=0}^\infty q^{n(n+1)/2}.\end{equation} This is Gauss's identity \cite[Identity (1.3)]{A72} with $q$ replaced by $-q$. However, since $\ell_o(\lambda)\equiv |\lambda| \mod 2$, Andrews' combinatorial proof  of Gauss' identity given in \cite{A72} also proves \eqref{I6.7-2} combinatorially. 
    
    For the remainder of the proof let $m\geq 3$.

   Denote by $\mathcal A(n)$ the set consisting of pairs of partitions $(\lambda, \mu)\vdash n$ such that 
  \begin{itemize}\item $\lambda$ is a partition with distinct parts  congruent to $0$, $s$ or  $-s$ modulo $2m$ or congruent to $2m+2s$ or $2m-2s$ modulo $4m$;

  \item $\mu$ is a partition with parts congruent to $s$ or $-s$ modulo $m$. (Thus, $\mu$ has parts congruent to $s,m+s,-s$ or $m-s$ modulo $2m$.)
  \end{itemize}
Define $$\mathcal A_e(n):=\{(\lambda, \mu)\in \mathcal A(n) \mid \ell(\lambda) \text{ even}\}$$ and $$\mathcal A_o(n):=\{(\lambda, \mu)\in \mathcal A(n) \mid \ell(\lambda) \text{ odd}\}.$$
  
  The left hand side of \eqref{I6.7-gen} is the generating function for the sequence $a_{e-o}(n):=|\mathcal A_e(n)|-|\mathcal A_o(n)|$.

We define a sign reversing  involution $\varphi$ on a subset of $\mathcal A(n)$.   

If $(\lambda, \mu)\in \mathcal A(n)$, we write  
$$\lambda=\lambda^{0,2m}\cup \lambda^{\pm s,2m}\cup\lambda^{\pm(2m+2s), 4m},\hspace{.1in}\mu= \mu^{\pm s,2m}\cup \mu^{\pm(m+s),2m}$$ and we consider several cases.
 \medskip

\noindent \underline{Case 1:} $\lambda^{\pm s,2m}\cup \mu^{\pm s,2m}\neq \emptyset$ (i.e., there is at least one part congruent to $\pm s\pmod{2m}$ in $\lambda$ or in $\mu$). Define $\varphi(\lambda, \mu):=F(\lambda, \mu)$. 

Hence,  $\varphi$ is a sign reversing involution on the subset of pairs $(\lambda, \mu)$ in $\mathcal A(n)$ with $\lambda^{\pm s, 2m}\cup \mu^{\pm s, 2m}\neq \emptyset$. 
\medskip 

\noindent \underline{Case 2:} $\lambda^{\pm s,2m}\cup \mu^{\pm s,2m}= \emptyset$ and ($\lambda^{\pm(2m+2s),4m}\neq \emptyset$ or $\mu^{\pm(m+s),2m}$ has at least one repeated part). In this case  the involution is similar to that of \cite{G}. Let $a$ be the largest part in $\lambda^{\pm(2m+2s),4m}$ and $b$ be the largest repeated part in $\mu^{\pm(m+s),2m}$ (with $a=0$ or $b=0$ if no such part exists). 

(i) If $a<2b$, remove two parts equal to $b$ from $\mu^{\pm(m+s),2m}$ and insert a part equal to $2b$ into $\lambda^{\pm(2m+2s),4m}$, i.e., 
  $$\varphi(\lambda, \mu):=(\lambda\cup (2b), \mu\setminus  (b,b)).$$ 

  (i) If $a\geq 2b$, remove  part  $a $ from $\lambda^{\pm(2m+2s),4m}$ and insert two parts equal to $a/2$  into $\mu^{\pm(m+s),2m}$, i.e., 
  $$\varphi(\lambda, \mu):=(\lambda\setminus  (a), \mu\cup  (a/2, a/2)).$$ 
Again, $\varphi$ is a sign reversing involution on the subset of pairs in $\mathcal A(n)$ satisfying $\lambda^{\pm s,2m}\cup \mu^{\pm s,2m}= \emptyset$ and ($\lambda^{\pm(2m+2s),4m}\neq \emptyset$ or $\mu^{\pm(m+s),2m}$ has at least one repeated part). \medskip

\noindent \underline{Case 3:} $\lambda^{\pm s,2m}= \mu^{\pm s,2m}=\lambda^{\pm(2m+2s),4m}= \emptyset$ and  $\mu^{\pm(m+s),2m}$ is a  partition with distinct parts. Note that in this case $\lambda=\lambda^{0,2m}$ and $\mu=\mu^{\pm(m+s),2m}$. 
Thus, to complete the proof, we need a combinatorial proof of \eqref{eq_staircase}. Of course, using $2m$ in place of $m$ and $z=-q^m$ in the proof of \cite{KK} also provides a combinatorial proof of \eqref{eq_staircase}. However, the proof provided below is simpler.

We denote by  $\mathcal C(n)$ the subset of pairs $(\lambda, \mu)\in \mathcal A(n)$ satisfying the conditions of Case 3. We define a sign reversing involution on the set $$\mathcal C^*(n)=\mathcal C(n) \setminus  \left(\{(\emptyset, \delta_{m-s, 2m, k})\mid k\geq 0\} \cup\{(\emptyset, \delta_{m+s, 2m, |k|})\mid k<0\}\right).$$

    Now we use a particular case of \cite[Lemma 2.1]{BM}. 
    Following the notation in \cite{BM} we denote by $\mathcal Q_{2m,m-s}(n)$  the set of distinct partitions of $n$ with all parts congruent to $m\pm s\pmod{2m}$  and by $\mathcal W_{2m,m-s}(n)$  the set of pairs $(\nu, k(mk-s))$, where $\nu$ is a partition  into parts divisible by $2m$,  $k\in \mathbb Z$, and $|\nu|+ k(mk-s)=n$.
    Then, the map $\xi_{2m,m-s}: \mathcal Q_{2m,m-s}(n)\to \mathcal W_{2m,m-s}(n)$ defined in  \cite[Lemma 2.1]{BM}  is a bijection. 

Let $\mathcal D(n)$  be the set of triples $(\lambda, \nu, k(mk-s))$, where $\lambda$ and $\nu$ have parts divisible by $2m$, $\lambda$ has distinct parts, $k\in \mathbb Z$, and $|\lambda|+|\mu|+k(mk-s)=n$.  The map $\zeta:\mathcal C(n)\to \mathcal D(n)$ defined by $\zeta(\lambda,\mu)=(\lambda, \xi_{2m,m-s}(\mu))$ is a bijection. 

 We define a sign reversing involution $\psi$ on $\mathcal D(n)\setminus (\emptyset, \emptyset, k(mk-s))$.   Let $(\lambda, \nu, k(mk-s))\in \mathcal D(n)$ with $\lambda\cup \nu \neq \emptyset$ and define $\psi(\lambda, \nu, k(mk-s))=(F(\lambda, \nu),k(mk-s)).$
Then the proof of \cite[Lemma 2.1]{BM} shows that the map $\varphi(\lambda,\mu):=\zeta^{-1}\circ\psi\circ\zeta(\lambda, \mu)$ is a sign reversing involution on  $\mathcal C^{*}(n)$.

Hence, the left hand side of \eqref{I6.7-gen} is the generating function for $|\mathcal C(n)\setminus \mathcal C^*(n)|$. 

This completes the combinatorial proof of Theorem \ref{T-gen}.
\end{proof}

The combinatorial proof of Theorem \ref{T-gen} above also gives a combinatorial proof of \cite[Theorem 38]{KMR}.

\begin{theorem}[\cite{KMR}, Theorem 38] Let $a$ be $1$ or $3$. Furthermore, let $(a_n)_{n\geq 0}$ be the sequence of non-negative integers $j$ such that $16j+a^2$ is a square. Then \begin{equation}\label{Th38}\sum_{n=0}^\infty q^{a_n}=(q^8;q^8)_\infty(-q^{4+a};q^8)_\infty(-q^{4-a};q^8)_\infty.\end{equation}
    \end{theorem}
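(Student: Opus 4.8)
The plan is to derive \eqref{Th38} as a specialization of Theorem \ref{T-gen}, whose combinatorial proof we have just completed. First I would set the parameters in \eqref{I6.7-gen} so that the product on the left becomes the product on the right of \eqref{Th38}. The form \eqref{eq_staircase} that actually carries the combinatorial content reads $(-q^{m+s};q^{2m})_\infty(-q^{m-s};q^{2m})_\infty(q^{2m};q^{2m})_\infty=\sum_{n=-\infty}^\infty q^{n(mn+s)}$; comparing this with the right side of \eqref{Th38}, namely $(q^8;q^8)_\infty(-q^{4+a};q^8)_\infty(-q^{4-a};q^8)_\infty$, I would take $2m=8$, i.e. $m=4$, and $m+s=4+a$, i.e. $s=a$ (valid since $a\in\{1,3\}$ satisfies $0<a<4=m$). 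Then $m-s=4-a$, and the product sides match verbatim.

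Next I would identify the series sides. With $m=4$ and $s=a$, the right-hand side of \eqref{eq_staircase} is $\sum_{n=-\infty}^\infty q^{n(4n+a)}$. It remains to check that $\{\,n(4n+a):n\in\Z\,\}$, listed as a set with the convention that each value is attained, coincides with $\{\,j\ge 0:16j+a^2\text{ is a perfect square}\,\}$, and that the correspondence is a bijection (so no coefficients collapse). Indeed, $16\cdot n(4n+a)+a^2=64n^2+16an+a^2=(8n+a)^2$, so every exponent $j=n(4n+a)$ satisfies the square condition; conversely, if $16j+a^2=t^2$ then $t\equiv a\pmod 2$ and in fact $t\equiv \pm a\pmod 8$ is forced since $t^2\equiv a^2\pmod{16}$, so one solves $t=\pm(8n+a)$ for a unique $n\in\Z$, giving $j=n(4n+a)\ge -a^2/16$, hence $j\ge 0$ as $j\in\Z$. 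Since $n\mapsto 8n+a$ is injective and the exponents $n(4n+a)$ are therefore distinct, $\sum_{n=-\infty}^\infty q^{n(4n+a)}=\sum_{j: 16j+a^2\in\square} q^j=\sum_{n\ge 0} q^{a_n}$, which is the left side of \eqref{Th38}.

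Putting these two observations together yields \eqref{Th38}, and because the equality \eqref{eq_staircase}, equivalently \eqref{I6.7-gen}, was established by the sign-reversing involution $\varphi$ of Case 3 (built from $\zeta$, $\psi$, and the bijection $\xi_{2m,m-s}$ of \cite[Lemma 2.1]{BM}), this also furnishes a combinatorial proof of \cite[Theorem 38]{KMR}. I do not anticipate a genuine obstacle here: the only point requiring a little care is the elementary number-theoretic verification that $j\mapsto n$ with $8n+a=\pm\sqrt{16j+a^2}$ is a well-defined bijection between the index sets, in particular that the two square roots $\pm(8n+a)$ never collide (they would only collide if $8n+a=0$, impossible for $a\in\{1,3\}$) and that negative values of $j$ cannot occur. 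Everything else is a direct substitution into Theorem \ref{T-gen}.
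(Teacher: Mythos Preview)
Your proposal is correct and follows essentially the same route as the paper: specialize \eqref{eq_staircase} (equivalently, Case~3 of the proof of Theorem~\ref{T-gen}) to $m=4$, $s=a$, and verify that $16j+a^2$ is a square iff $j=n(4n+a)$ for some $n\in\Z$. The paper states this number-theoretic equivalence in the form $j=k(4k-a)$ (the same set under $k\mapsto -n$) without further justification; your added check that the map $n\mapsto n(4n+a)$ is injective and lands in $\N_0$ is a welcome detail but not a different idea.
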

    \begin{proof}[Combinatorial Proof]
    First note that, if $a\in \{1,3\}$, then $16j+a^2$  is a square if and only if $j=k(4k-a)$  for some $k\in \mathbb Z$.  Then, Case 3 in the proof of Theorem \ref{T-gen} with $m=4$ and $s=a$ proves the theorem.
\end{proof}

\section{Distinct $5$-regular partitions.}\label{distinct-reg}

  Proving identities (6.8), (6.9), (6.11), and (6.14) in \cite{M1} combinatorially  is still an open problem. The difficulty consists in finding a combinatorial proof of the Weierstrass addition formula \cite[Identity (8.1)]{KMR} used in \cite{KMR} to prove these identities. 

In this section, we use the Weierstrass addition formula to prove Theorem \ref{d5} below, an interesting identity involving the number of distinct $5$-regular partitions of $n$. This is sequence 
A096938 in \cite{OEIS} where several other interpretations of this sequence are discussed. 

We denote by $\mathcal D_5(n)$ the set of distinct $5$-regular partitions of $n$, i.e., partitions of $n$ into distinct parts not divisible by $5$. If $a, b$ are integers such that $0<a,b<10$ and $a \not \equiv \pm b \pmod{10}$, we denote by  $\mathcal C_{a,b}(n)$  the set of pairs of partitions $(\lambda, \mu)\vdash n$ such that  $\lambda$ is a partition with distinct parts not congruent to $\pm a$ or $\pm b$ modulo $10$ and $\mu$ is a distinct partition with all parts divisible by $5$. 
\begin{theorem}\label{d5}
    For $n\geq 1$, $$|\mathcal D_5(n)|=| \mathcal C_{1,2}(n)|+| \mathcal C_{3,4}(n-1)|.$$
\end{theorem}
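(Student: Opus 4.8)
The statement is an identity between partition counting functions, so the natural strategy is to turn everything into generating functions and recognize the result as a specialization of the Weierstrass addition formula \cite[Identity (8.1)]{KMR}, exactly as advertised in the section heading. First I would write down the three generating functions. The number of distinct $5$-regular partitions has the familiar product
\begin{equation*}
\sum_{n\geq 0}|\mathcal D_5(n)|q^n=\frac{(-q;q)_\infty}{(-q^5;q^5)_\infty}=(-q;q^5)_\infty(-q^2;q^5)_\infty(-q^3;q^5)_\infty(-q^4;q^5)_\infty.
\end{equation*}
For the right-hand side, a pair $(\lambda,\mu)\in\mathcal C_{a,b}(n)$ with $\{a,b\}=\{1,2\}$ has generating function $(-q^5;q^5)_\infty(-q^3;q^{10})_\infty(-q^4;q^{10})_\infty(-q^7;q^{10})_\infty(-q^8;q^{10})_\infty$ — distinct parts divisible by $5$ from $\mu$, and distinct parts in the residues $\pm3,\pm4\pmod{10}$ from $\lambda$ — and similarly $\mathcal C_{3,4}$ contributes $(-q^5;q^5)_\infty(-q^1;q^{10})_\infty(-q^2;q^{10})_\infty(-q^9;q^{10})_\infty(-q^{10-}\dots)$, i.e. the residues $\pm1,\pm2\pmod{10}$. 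So the claimed identity becomes
\begin{equation*}
(-q;q^5)_\infty\cdots(-q^4;q^5)_\infty=(-q^5;q^5)_\infty\Big(\prod_{\pm3,\pm4(10)}(-q^j;q^{10})_\infty+q\prod_{\pm1,\pm2(10)}(-q^j;q^{10})_\infty\Big),
\end{equation*}
where in the $q$-shifted term I must account for $\mathcal C_{3,4}(n-1)$, which simply multiplies that generating function by $q$.

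Next I would rewrite each side via the Jacobi triple product so that it matches the shape in which \cite[Identity (8.1)]{KMR} is stated. Splitting each residue class modulo $5$ into its two subclasses modulo $10$ and recombining, the left side
$(-q;q)_\infty/(-q^5;q^5)_\infty$
can be expressed — after multiplying and dividing by suitable $(q^{10};q^{10})_\infty$ factors — as a ratio of theta functions $\theta(z;q)=\sum_{n}(-1)^nz^nq^{\binom n2}$ evaluated at base $q^{10}$ and the relevant powers of $q$. Likewise each of the two products on the right is, by JTP with base $q^{10}$, a single theta function up to a power-of-$q$ correction and a factor of $(q^{10};q^{10})_\infty$. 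The Weierstrass addition formula is precisely a three-term (or in its general form, four-term) relation among products of theta functions at shifted arguments; the job is to choose the arguments $x,y,z,w$ in \cite[(8.1)]{KMR} so that the two summands on the right of the displayed identity are the two products appearing there and the left side is the remaining product. This is the same bookkeeping Krattenthaler–Merca–Radu carry out to derive their identities (6.8), (6.9), (6.11), (6.14); here $m=5$ and one wants the combination of two residue-pairs $\{\pm1,\pm2\}$ and $\{\pm3,\pm4\}$ rather than a single residue, which is what forces the two-term sum.

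The steps, in order, are: (1) record the three product generating functions above; (2) reduce the identity to the displayed theta-product identity by clearing the common factor $(-q^5;q^5)_\infty$ and recognizing $\mathcal C_{3,4}(n-1)\leftrightarrow q\cdot(\text{gen.\ fn.\ of }\mathcal C_{3,4})$; (3) rewrite both sides in terms of $\theta(\cdot;q^{10})$ using the Jacobi triple product; (4) match against the Weierstrass addition formula \cite[(8.1)]{KMR} with the appropriate choice of arguments, and verify the power-of-$q$ prefactors agree (this is a finite check of exponents of the form $\binom n2$-type quadratics modulo $10$). The main obstacle I expect is step (4): pinning down the exact substitution into the Weierstrass formula and then correctly tracking the powers of $q$ and the signs so that the two theta products land on the \emph{positive} side with the needed $q^{\,1}$ shift on one of them. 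Everything else is routine product manipulation; this last matching is where an off-by-a-power-of-$q$ error is easiest to make, so I would verify it by comparing low-order coefficients ($n\leq 10$, say) of both sides as a sanity check before presenting the closed-form substitution.
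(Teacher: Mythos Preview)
Your generating function for $\mathcal C_{a,b}(n)$ is wrong. The definition only excludes from $\lambda$ the residues $\pm a,\pm b\pmod{10}$; parts congruent to $0$ or $5\pmod{10}$ are \emph{allowed} in $\lambda$ (and independently also appear in $\mu$). For $\mathcal C_{1,2}$ the correct product is
\[
(-q^5;q^5)_\infty\cdot(-q^3,-q^4,-q^5,-q^6,-q^7,-q^{10};q^{10})_\infty
=(-q^3,-q^4,-q^6,-q^7;q^{10})_\infty\,(-q^5;q^{10})^2_\infty(-q^{10};q^{10})^2_\infty,
\]
and similarly for $\mathcal C_{3,4}$; you also have a slip, since $-4\equiv 6\pmod{10}$, not $8$. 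Without the extra factor $(-q^5;q^{10})^2_\infty(-q^{10};q^{10})^2_\infty$ (which equals $\tfrac12\,\theta(-q^5;q^{10})\theta(-1;q^{10})$ in the paper's notation) the two sides of your displayed identity do not even agree at $q^5$.

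Beyond this, the paper's route is different from your plan. It does not try to match the unsigned identity to a single Weierstrass instance. Instead it makes \emph{two} separate substitutions into \cite[(8.1)]{KMR} (each with $q\mapsto q^{10}$: first $u=-q^3,\ v=q,\ x=q^4,\ y=q^3$, then $u=q^5,\ v=q^2,\ x=q^6,\ y=q^4$) and reads each resulting three-term theta identity as a \emph{signed} partition count, using $\ell_o(\lambda)\equiv|\lambda|\pmod 2$ to convert parity-of-length signs into a global $(-1)^n$. This yields the refinement
\[
|\{\lambda\in\mathcal D_5(n):\ell(\lambda)\text{ even}\}|=|\mathcal C_{1,2}(n)|,\qquad
|\{\lambda\in\mathcal D_5(n):\ell(\lambda)\text{ odd}\}|=|\mathcal C_{3,4}(n-1)|,
\]
and the theorem is their sum. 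Once you correct the generating functions, your target identity carries the common factor $\theta(-q^5;q^{10})\theta(-1;q^{10})$ in both right-hand summands together with a stray $\tfrac12$; that is not the generic shape of a single three-term Weierstrass relation, so your step~(4) would not close as written. The paper's signed, two-application approach is what makes the matching land cleanly.
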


\begin{proof}
If $m\geq 2$ and $0<s<m$, we use the notation $$\theta(q^s;q^m):=(q^s;q^m)_\infty(q^{m-s};q^s)_\infty.$$ 

As explained in \cite[pg. 225]{KMR}, if we first replace $q$ by $q^{10}$ and then take $u=-q^3$, $v=q$, $x=q^4$ and $y=q^3$, in \cite[Identity (8.1)]{KMR}, we obtain
\begin{align}
   \notag \theta(q^2;q^{10})\theta(q^4;q^{10})& \theta(-q;q^{10})\theta(-q^3;q^{10})\\\notag  &  + \theta(q;q^{10})\theta(q^3;q^{10})\theta(-q^2;q^{10})\theta(-q^4;q^{10})\\ \label{WSF1} &\ \ \  = \theta(q^3;q^{10})\theta(-q^4;q^{10})\theta(q^5;q^{10})\theta(-1;q^{10}).
\end{align}

We give a combinatorial interpretation of \eqref{WSF1}.  The $q$-series $$\theta(q^2;q^{10})\theta(q^4;q^{10})\theta(-q;q^{10})\theta(-q^3;q^{10})$$ is the generating function for \begin{align*}& |\{\lambda \in \mathcal D_5(n) \mid \ell_e(\lambda) \text{ even}\}|-|\{\lambda \in \mathcal D_5(n) \mid \ell_e(\lambda) \text{ odd}\}|\\ & \ \ \ \ \ = (-1)^n\left(|\{\lambda \in \mathcal D_5(n) \mid \ell(\lambda) \text{ even}\}|-|\{\lambda \in \mathcal D_5(n) \mid \ell(\lambda) \text{ odd}\}|\right),\end{align*} and the $q$-series $$\theta(q;q^{10})\theta(q^3;q^{10})\theta(-q^2;q^{10})\theta(-q^4;q^{10})$$ is the generating function for $$|\{\lambda \in \mathcal D_5(n) \mid \ell_o(\lambda) \text{ even}\}|-|\{\lambda \in \mathcal D_5(n) \mid \ell_o(\lambda) \text{ odd}\}|=(-1)^n |\mathcal D_5(n)|.$$ Here, we used  the fact that $\ell_0(\lambda)\equiv |\lambda|\mod 2$.

 The $q$-series   \begin{align*}& \theta(q^3;q^{10})\theta(-q^4;q^{10})\theta(q^5;q^{10})\theta(-1;q^{10})\\ & =  2(q^3; q^{10})_\infty(q^7; q^{10})_\infty(-q^4; q^{10})_\infty(-q^6; q^{10})_\infty(q^5; q^{10})^2_\infty(-q^{10}; q^{10})^2_\infty\end{align*} is the generating function for \begin{align*}2 (|\{(\lambda, \mu)\in \mathcal C_{1,2}(n) \mid \ell_o(\lambda)+\ell_o(\mu) \text{ even}\}|&-|\{(\lambda, \mu)\in \mathcal C_{1,2}(n) \mid \ell_o(\lambda)+\ell_o(\mu) \text{ odd}\}|)\\ & = 2(-1)^n | \mathcal C_{1,2}(n)|.\end{align*}

Hence, the combinatorial interpretation of \eqref{WSF1}  is $$|\{\lambda \in \mathcal D_5(n) \mid \ell(\lambda) \text{ even}\}|=| \mathcal C_{1,2}(n)|.$$

If we now follow \cite[pg. 226]{KMR} and first replace $q$ by $q^{10}$ and then take $u=q^5$, $v=q^2$, $x=q^6$ and $y=q^4$, in \cite[Identity (8.1)]{KMR}, we obtain
\begin{align}
   \notag \theta(q^2;q^{10})\theta(q^4;q^{10})& \theta(-q;q^{10})\theta(-q^3;q^{10})\\\notag  &  - \theta(q;q^{10})\theta(q^3;q^{10})\theta(-q^2;q^{10})\theta(-q^4;q^{10})\\ \label{WSF2} &\ \ \  = q\theta(-q^2;q^{10})\theta(-1;q^{10})\theta(q^5;q^{10})\theta(q;q^{10}).
\end{align}
By the same argument as above, \begin{align*}\theta(-q^2;q^{10})\theta(-1;q^{10})\theta(q^5;q^{10})\theta(q;q^{10})\end{align*} is the generating function for $2(-1)^n|\mathcal C_{3,4}(n)|$ and the combinatorial interpretation of \eqref{WSF2}  is $$|\{\lambda \in \mathcal D_5(n) \mid \ell(\lambda) \text{ odd}\}|=| \mathcal C_{3,4}(n-1)|.$$ 
\end{proof}

 \section{Concluding remarks} \label{concluding}

 In this article we considered several conjectures and identities that first appeared in \cite{M1}. Identities (6.7), (6.10), (6.12) and (6.13) from \cite{M1} are proved using the Jacobi Triple Product identity in \cite{KMR}. We generalized these identities and gave both analytic and combinatorial proofs. Considerations of  similar identities from \cite{M1}, and their proofs in \cite{KMR} led us to discover an interesting result about the number of distinct $5$-regular partitions of $n$. Theorem  \ref{main} proves the cases $k=1,2,3$ of Conjecture 4.3 in \cite{M1} which concerns the non-negativity of the coefficients of a certain truncated theta function. The theorem gives inequalities involving the number of partitions of $n$ with parts congruent to $\pm S \mod R$ and it also leads to results involving the generalizations of the minimal excludant statistic for partitions. 

 At the end of \cite{M1}, Merca made another conjecture about the non-negativity of the coefficients of a  truncated series. Suppose $R\geq 2$ and $1\leq S<R$ and define  $$T_{\pm}(q,k,S,R):=\frac{1}{(q^S, q^{R-S};q^R)_\infty }\sum_{n=k}^\infty q^{n(3n\mp 1)R/2\pm 3nS}(1-q^{S\pm nR}).$$
Conjecture 6.15 on \cite{M1} states that, for all $k\geq 1$,  each of $T_{+}(q,k,S,R)$, $-T_{-}(q,k,S,R)$, and $T_{-}(q,k,S,R)+T_{+}(q,k,S,R)$ has non-negative coefficients.
We  establish the first two parts of the conjecture.

\begin{theorem} For $1\leq S<R$ and $k\geq 1$, \begin{itemize}
    \item[(i)]  $T_{+}(q,k,S,R)$ has non-negative coefficients;
    \item[(ii)]  $T_{-}(q,k,S,R)$ has non-positive coefficients.
\end{itemize}
\end{theorem}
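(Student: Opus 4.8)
The plan is to reduce each part of the statement to a non-negativity (resp. non-positivity) assertion about the coefficients of a single explicit $q$-series obtained via Shanks-type truncations of the Jacobi Triple Product identity, in direct analogy with the way \eqref{Shanks}--\eqref{b3:qseries} were used to prove Theorems \ref{thm:b3} and \ref{thm:b6}. The starting point is the observation that $\sum_{n=k}^\infty q^{n(3n\mp1)R/2 \pm 3nS}(1-q^{S\pm nR})$ is precisely the tail (from index $k$ onward) of a \emph{doubly} truncated theta series: writing out the exponents $n(3n-1)R/2 + 3nS$ and $n(3n-1)R/2 + 3nS + S + nR = n(3n+1)R/2 + (3n+1)S$ one recognizes the pentagonal-number progression for $(q^R;q^R)_\infty$ shifted by $S$, i.e. the expansion coming from \eqref{JTP} with $m=3R$ and the appropriate specialization, or equivalently a $q^R$-version of Euler's pentagonal number theorem composed with a shift by $\delta_{S,R,\bullet}$-type staircases. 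So first I would identify $T_+(q,k,S,R)$ and $T_-(q,k,S,R)$ as partial sums of the theta expansion of $(q^S,q^{R-S};q^R)_\infty^{-1}\cdot(\text{a theta series that telescopes to }1)$, making the "full" ($k=0$) series reduce to a known identity.

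Next I would apply a Shanks-style finite summation. The key algebraic input is an identity of the shape \eqref{S1-trunc}/\eqref{Shanks}: the truncated sum $\sum_{n=k}^\infty q^{n(3n\mp1)R/2\pm3nS}(1-q^{S\pm nR})$, after division by $(q^S,q^{R-S};q^R)_\infty$, should rewrite as a manifestly positive (resp. negative) combination of products of the form $q^{(\text{explicit positive exponent})}\big/\big((q^{?};q^R)_\infty (q^R;q^R)_n \cdots\big)$ times an inner sum with non-negative coefficients — exactly the structure of $H_1(q)+H_2(q)$ in \eqref{b3:qseries}. I would derive this by the standard manipulation: split the tail $\sum_{n\ge k}$ using the two pieces $1$ and $-q^{S\pm nR}$, re-index, and recognize a geometric-type collapse; Merca's own conjectural form in \cite{M1} strongly suggests the target closed form, and Shanks' classical argument (or the Rogers--Fine / $q$-binomial approach used by Andrews--Merca) supplies the mechanism. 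Part (ii) follows from part (i) by the symmetry $n\mapsto -n$ in the theta series (which swaps the $+$ and $-$ branches up to an overall sign), so I would prove (i) carefully and then note that $-T_-(q,k,S,R)$ has the same closed form with the roles of $S$ and $R-S$ (or of $q^S$ and $q^{R-S}$) interchanged, hence also non-negative coefficients.

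Finally, to complete (i) I would exhibit the combinatorial/series-positivity: each summand in the Shanks expansion is $q^{c}\cdot(\text{reciprocal of a product }(q^{a};q^R)_\infty) \cdot \sum_{n\ge0} q^{(\text{quadratic in }n\text{ with positive leading coeff})}/\big((q^R;q^R)_n (q^{b};q^R)_{n+k}\big)$, and since $|q|<1$ all the reciprocal Pochhammer factors $1/(x;q^R)_\infty$ and $1/(x;q^R)_{m}$ with $0<x<1$ (in the formal sense, $x$ a positive power of $q$) have non-negative $q$-coefficients, while $q^c$ and $q^{(\cdots)}$ are monomials; products and sums of non-negative-coefficient series are non-negative. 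Assembling these yields $T_+(q,k,S,R)$ with non-negative coefficients, and the sign-flip of (ii). The main obstacle I anticipate is the bookkeeping in the Shanks-type reduction: getting the two-variable telescoping to land on a closed form in which \emph{every} exponent that appears is provably a positive power of $q$ (so that no cancellation can occur), uniformly in $R$, $S$ with $1\le S<R$ and in $k\ge1$ — in particular handling the case $S=R/2$, which as in the proof of Theorem \ref{main} degenerates $(q^S,q^{R-S};q^R)_\infty$ into a square and may need a separate specialization argument along the lines used at the end of that proof.
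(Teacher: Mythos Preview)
Your plan is a large detour around what the paper does in a few lines of elementary algebra, and it contains a real gap. The paper never invokes a Shanks-type identity here. Instead, for (i) it simply splits
\[
1-q^{S+nR} \;=\; \bigl(1-q^{n(R-S)}\bigr)\;+\;\bigl(q^{n(R-S)}-q^{S+nR}\bigr),
\]
observes that each piece is divisible by one factor of the denominator via $\dfrac{1-q^{n(R-S)}}{1-q^{R-S}}=1+q^{R-S}+\cdots+q^{(n-1)(R-S)}$ and $\dfrac{1-q^{(n+1)S}}{1-q^{S}}=1+q^{S}+\cdots+q^{nS}$, and thereby rewrites $T_+$ as a sum of two series each of which is a product of $1/(q^{a},q^{b};q^R)_\infty$ by a manifestly non-negative inner sum. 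That is the entire argument; no $H_1+H_2$ machinery, no reference to \eqref{Shanks}, no special treatment of $S=R/2$.

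The gap in your proposal is the passage from (i) to (ii). The substitution $n\mapsto -n$ does \emph{not} send $T_+(q,k,S,R)$ to $-T_-(q,k,S,R)$ (nor to $-T_-(q,k,R-S,R)$): the exponent in $T_-$ involves $q^{S-nR}$ with $S-nR<0$, so the two branches are not related by swapping $S\leftrightarrow R-S$ in any clean way. The paper handles (ii) by a parallel but separate splitting (writing $-(1-q^{S-nR})=q^{S-nR}(1-q^{nR-S})$ first and then decomposing $1-q^{nR-S}$), and in fact must treat $k=1$ as a special case because one of the resulting geometric sums $(1-q^{(n-1)S})/(1-q^{S})$ is empty when $n=1$. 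Your symmetry shortcut would miss this. So before pursuing the heavy Shanks route, try the one-line decomposition above; it gives both parts directly and uniformly in $R,S$.
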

\begin{proof}Let  $1\leq S <R$ and $k\geq 1$.

 To show that for the $q$-series $T_+(q,k,S,R)$ has non-negative coefficients, we rewrite it as 
\begin{align*}&T_+(q,k,S,R)
\\& =\frac{1}{(q^S, q^{R-S};q^R)_\infty} \sum_{n=k}^\infty q^{n(3n-1)R/2+3nS}(1-q^{n(R-S)}+q^{n(R-S)}-q^{S+nR})\\ & = \frac{1}{(q^S, q^{2R-S};q^R)_\infty} \sum_{n=k}^\infty q^{n(3n-1)R/2+3nS}\, \frac{1-q^{n(R-S)}}{1-q^{R-S}}\\ & \ \ \ + \frac{1}{(q^{S+R}, q^{R-S};q^R)_\infty} \sum_{n=k}^\infty q^{n(3n-1)R/2+3nS+nR-nS}\, \frac{1-q^{(n+1)S}}{1-q^S}\\ & = \frac{1}{(q^S, q^{2R-S};q^R)_\infty} \sum_{n=k}^\infty q^{n(3n-1)R/2+3nS} (1+q^{R-S}+q^{2(R-S)}+\cdots +q^{(n-1)(R-S)})\\ & \ \ \ + \frac{1}{(q^{S+R}, q^{R-S};q^R)_\infty} \sum_{n=k}^\infty q^{n(3n+1)R/2+2nS}(1+q^{S}+q^{2S}+\cdots+ q^{nS})
.\end{align*} Clearly, this $q$-series has non-negative coefficients. \medskip

 If $k\geq 2$, we rewrite
\begin{align*}&-T_-(q,k,S,R)\\ & = - \frac{1}{(q^S, q^{R-S};q^R)_\infty} \sum_{n=k}^\infty q^{n(3n+ 1)R/2- 3nS}(1-q^{S- nR})\\& =\frac{1}{(q^S, q^{R-S};q^R)_\infty} \sum_{n=k}^\infty q^{n(3n+1)R/2-3nS+S-nR}(1-q^{n(R-S)}+q^{n(R-S)}-q^{nR-S})\\ & = \frac{1}{(q^S, q^{2R-S};q^R)_\infty} \sum_{n=k}^\infty q^{n(3n+1)R/2-3nS+S-nR}\, \frac{1-q^{n(R-S)}}{1-q^{R-S}}\\ & \ \ \ + \frac{1}{(q^{S+R}, q^{R-S};q^R)_\infty} \sum_{n=k}^\infty q^{n(3n+1)R/2-3nS+S-nR+nR-nS}\, \frac{1-q^{(n-1)S}}{1-q^S}\\ & = \frac{1}{(q^S, q^{2R-S};q^R)_\infty} \sum_{n=k}^\infty q^{n(3n-1)R/2-3nS+S} (1+q^{R-S}+q^{2(R-S)}+\cdots +q^{(n-1)(R-S)})\\ & \ \ \ + \frac{1}{(q^{S+R}, q^{R-S};q^R)_\infty} \sum_{n=k}^\infty q^{n(3n+1)R/2-4nS+S}(1+q^{S}+q^{2S}+\cdots+ q^{(n-2)S})
.\end{align*}
If $k=1$, we have \begin{align*}&-T_-(q,1,S,R) =\frac{q^{R-2S}}{(q^S, q^{2R-S};q^R)_\infty}  - T_-(q,2,S,R). \end{align*} Clearly, in each case  $-T_-(q,1,S,R)$ has non-negative coefficients.
\end{proof} 

Finally, we note that results of a similar flavor appear in \cite{M2}.
 Let $g(n)=|\mathcal P_{\pm 1,5}(n)|$ (respectively $h(n)=|\mathcal P_{\pm 2,5}(n)|$) be the number of partitions of $n$ into parts congruent to $\pm 1 \mod 5$ (respectively $\pm 2\mod 5$). If $\xi(n)$ is either of $g(n)$ or $h(n)$, Merca \cite[Corollary 3.3]{M2} proves the following families of linear homogeneous inequalities. For $k\geq 1$, $S\in\{1,2\}$, and $n\geq 0$ \begin{equation}\label{eq:M2}
        u_{\xi,S}^{\pm}(n):= \pm \sum_{j=k}^\infty\left(\xi\left(n-5j(3j\pm 1)/2\pm 3jS\right)-\xi\left(n-5j(3j\pm 1)/2\mp (3j\pm 1)S\right) \right)\geq 0
    \end{equation} and asks for combinatorial interpretations of these sums. The requested interpretations can be read off the generating functions obtained in the proofs in \cite{M2}.

  In  the proof of \cite[Theorem 2.1]{M2} (with $R=5, S=1$) it is shown that $$\sum_{n=0}^\infty u_{g,1}^{+}(n)q^n=\frac{1}{(q^{6},q^{4};q^5)_\infty}\sum_{j=k}^\infty q^{5j(3j+1)/2-3j}\big(1+q+q^{2}+\cdots +q^{6j}\big).$$

    Now, $\displaystyle \frac{1}{(q^{6},q^{4};q^5)_\infty}$ is the generating function for the number of partitions $\lambda\in \mathcal P_{\pm 1,5}(n)$ of $n$ with no part equal to $1$. 
    We can interpret $\displaystyle q^{5j(3j+1)/2-3j}$ as generating a single partition  
$(5j+1)^j\cup \delta_{1,5,j}$.

    Then  $$u_{g,1}^+(n)=|\{(\lambda,(5j+1)^j\cup \delta_{1,5,j})\vdash n  \mid   \lambda\in \mathcal P_{\pm 1, 5}, m_\lambda(1)\leq 6j, j\geq k\}|.$$

In a similar way we can show that
\begin{align*}u_{h,2}^+(n)&=|\{ (\lambda, (5(j-1)+2)^j\cup \delta_{2,5,j})\vdash n \mid \lambda \in \mathcal P_{\pm2,5}, m_{\lambda}(2)\leq 6j, j\geq k \}|\\
u_{g,1}^-(n)&=|\{ (\lambda, (5j+1)^{j-1}\cup\delta_{1,5,j}\cup (4)) \vdash n \mid \lambda \in \mathcal P_{\pm1,5}, m_{\lambda}(1)\leq 6j-2, j\geq k\}|\\ 
u_{h,2}^-(n)&=|\{(\lambda, (5(j-1)+2)^{j-1}\cup \delta_{2,5,j}\cup (3)) \vdash n\mid \lambda \in \mathcal P_{\pm 2, 5}, m_\lambda(2)\leq 6j-2, j\geq k \}|. 
\end{align*}

Using the generating functions in  \cite[Corollary 3.2]{M2}, we have 
\begin{align*}u_{g,2}^+(n)&=|\{ (\lambda, (\delta_{4,5,2j-1}\setminus \delta_{4,5,j-1}))\vdash n \mid \lambda \in \mathcal P_{\pm1,5}, m_{\lambda}(1)\leq 12j+1, j\geq k \}|\\ u_{g,2}^-(n)&=|\{ (\lambda, (\delta_{4,5,2j-2}\setminus \delta_{4,5,j-1})\cup(5(j-1)+1))\vdash n \mid \lambda \in \mathcal P_{\pm1,5}, m_{\lambda}(1)\leq 12j-3, j\geq k \}|\\
u_{h,1}^+(n)&=|\{ (\lambda, (\delta_{2,5,2j}\setminus \delta_{2,5,j}))\vdash n \mid \lambda \in \mathcal P_{\pm2,5}, m_{\lambda}(2)\leq 3j-2, j\geq k \}|\\ & \ \ \ +|\{ (\lambda, (\delta_{3,5,2j+1}\setminus \delta_{3,5,j+2})\cup(5j+3))\vdash n \mid \lambda \in \mathcal P_{\pm2,5}, m_{\lambda}(3)= 1, j\geq k \}|
\\ u_{h,1}^-(n)&=|\{ (\lambda, (\delta_{2,5,2j-1}\setminus \delta_{2,5,j}) \cup(5(j-1)+3))\vdash n \mid \lambda \in \mathcal P_{\pm2,5}, m_{\lambda}(2)\leq 3j-3, j\geq k \}|\\ & \ \ \ 
+|\{ (\lambda, (\delta_{3,5,2j}\setminus \delta_{3,5,j+1})\cup(5(j-1)+2))\vdash n \mid \lambda \in \mathcal P_{\pm2,5}, m_{\lambda}(3)= 1, j\geq k \}|.
\end{align*}

\end{document}